\title{Infinity Laplacian equation with strong absorptions}
\author{ Dami\~{a}o J. Ara\'ujo \quad Raimundo Leit\~ao \quad Eduardo V. Teixeira }
\date{}
\def \dist {\mathrm{dist}}
\def \Leb {\mathscr{L}^n}
\newtheorem{theorem}{Theorem}[section] 
\newtheorem{lemma}[theorem]{Lemma}
\newtheorem{corollary}[theorem]{Corollary}
\theoremstyle{definition}
\theoremstyle{remark}
\newtheorem{remark}[theorem]{Remark}
\numberwithin{equation}{section}
\newcommand{\intav}[1]{\mathchoice {\mathop{\vrule width 6pt height 3 pt depth  -2.5pt
\kern -8pt \intop}\nolimits_{\kern -6pt#1}} {\mathop{\vrule width
5pt height 3  pt depth -2.6pt \kern -6pt \intop}\nolimits_{#1}}
{\mathop{\vrule width 5pt height 3 pt depth -2.6pt \kern -6pt
\intop}\nolimits_{#1}} {\mathop{\vrule width 5pt height 3 pt depth
-2.6pt \kern -6pt \intop}\nolimits_{#1}}}
\begin{document}
\maketitle

\begin{abstract}
We study regularity properties of solutions to reaction-diffusion equations ruled by the infinity laplacian operator. We focus our analysis in models presenting plateaus, i.e. regions where a non-negative solution vanishes identically. We obtain sharp geometric regularity estimates for solutions along the boundary of plateaus sets. In particular we show that the $(n-\epsilon)$-Hausdorff measure of the plateaus boundary is finite, for a universal number $\epsilon>0$. 

\medskip

\noindent \textbf{Keywords:} Reaction-diffusion equations, infinity laplacian, regularity

\medskip

\noindent \textbf{AMS Subject Classifications:} 35J60, 35B65
\end{abstract}


\section{Introduction} 

The mathematical analysis of problems involving the infinity Laplacian operator, 
\begin{equation}\label{infty_lap eq}
	\Delta_\infty u := \sum\limits_{i,j} \partial_i u \partial_{ij} u \partial_{j} u = (D u)^TD^2u\; Du, 
\end{equation}
constitutes a beautiful chapter of the modern theory of partial differential equations,  yet far from its denouement. The systematic study of problems involving the infinity laplacian operator has been originated by the pioneering works of G. Aronsson \cite{A1, A2}.  The initial purpose of this line of research is to answer the following natural question: given a bounded domain $O \subset \mathbb{R}^n$ and a Lipschitz function $g \colon \partial O \to \mathbb{R}$, find its best Lipschitz extension, $f$, in the sense that it agrees with $g$ on the boundary and for any $O' \Subset O$, if $f = h$ on $\partial O'$, then $\|f\|_{\text{Lip}(O')} \le \|h\|_{\text{Lip}(O')}$. Such a function $f$ is said to be an absolutely  minimizing Lipschitz extension of $g$ in $O$.  Jensen in \cite{J} has proven that a function in an absolutely  minimizing Lipschitz extension if, and only if, it is a viscosity solution to the homogeneous equation $\Delta_\infty u = 0.$ That is,  the infinity Laplacian rules the Euler-Lagrange equation associated to this $L^\infty$ minimization problem. 

\par
\medskip

Through the years, several different applications of the infinity Laplacian theory emerged in the literature, \cite{CMS, PSS, BCF}, just to cite few. We refer to \cite{ACJ} for an elegant discussion on the theory of absolutely  minimizing Lipschitz extensions.

\par
\medskip

While, existence  and uniqueness of viscosity solution to the homogeneous Dirichlet problem $\Delta_\infty h = 0$, in $O$, $u = g$, on $\partial O$ is nowadays fairly well established,  obtaining improved regularity estimates for infinity harmonic functions remains a major open issue in the theory of nonlinear partial differential equations. The example of Aronsson
$$
	h(x,y) = x^{{4}/{3}} - y^{{4}/{3}}
$$
hints out to one of the most famous conjecture in this field: the first derivatives of infinity harmonic functions should be H\"older continuous with optimal exponent $\frac{1}{3}$. The best results known up to date are due to Evans and Savin,  \cite{Evans_Savin}, who proved that infinity harmonic functions in the plane are of class $C^{1,\alpha}$, for some $0<\alpha \ll 1$, see also \cite{Savin}, and to Evans and Smart, \cite{ES}, who obtained everywhere differentiability for infinity harmonic functions in any dimension.

\par
\medskip

The theory of inhomogeneous infinity laplacian equations $\Delta_\infty u = f(X)$ is more recent and subtle. Lu and Wang in \cite{LW} has proven existence and uniqueness of continuous viscosity solutions to the Dirichlet problem
\begin{equation}\label{NH eq}
	\left \{ 
		\begin{array}{rlll}
			\Delta_\infty u &=& f(X) &\text{ in } O \\
			u &=& g &\text{ on } \partial O,
		\end{array}
	\right.
\end{equation}
provided the source function $f$ does not change sign, i.e. either $\inf f > 0$ or else $\sup f < 0$.   Uniqueness may fail if such a condition is violated, \cite[Appendix A]{LW}. While Lipschitz estimates and everywhere differentiability also hold for a function whose infinity laplacian is bounded in the viscosity sense, see \cite{E}, no further regularity is so far known for inhomogeneous equations. 

\par
\medskip

This current work is devoted to the study of reaction-diffusion models ruled by the infinity Laplacian operator. Namely, for $\lambda > 0$ and $0 \leq \gamma < 3$, let
\begin{equation}\label{Lgamma}
	\mathcal{L}^{\,\gamma}_{\, \infty} \, v := \Delta_{\,\infty} v - \lambda (v^+)^\gamma
\end{equation}
denote the $\infty$-diffusion operator with $\gamma$-strong absorption. The case $\gamma =0$ is related to the infinity-obstacle problem, \cite{RTU}. The constant $\lambda >0$ is called the Thiele modulus, which adjusts the ratio of reaction rate to diffusion--convection rate. Given a bounded domain $\Omega \subset \mathbb{R}^n$, $n\ge 2$, and a continuous, nonnegative boundary value datum $g \in C(\partial \Omega)$, we study existence, uniqueness and regularity issues to the Dirichlet problem  
\begin{equation}\label{eq}
\left\{
\begin{array}{rcc}
\mathcal{L}^{\,\gamma}_{\, \infty} \, u = 0 & \mbox{in} & \Omega \\ 
u = \phi & \mbox{on} & \partial\Omega. \\
\end{array}
\right.
\end{equation}
An important feature in the mathematical formulation of equation \eqref{eq} is the possible existence of plateaus, i.e., a priori unknown regions where the function vanishes identically.  
 
\par
\medskip

Upon establishing existence of a viscosity solution, equation \eqref{eq} can be regarded as a inhomogeneous infinity laplacian equation; however  the corresponding source  function is not bounded away from zero.  Notwithstanding, as a preliminar result, we show uniqueness, up-to-the-boundary continuity, and non-negativeness of viscosity solution to Equation \eqref{eq}, Theorem \ref{existence theorem}. The proof is based on comparison principle methods, proven to hold for the operator $\mathcal{L}_\infty^\gamma$. 

\par
\medskip

The heart of the matter, though, lies on geometric regularity estimates for the solution to Equation \eqref{eq}. While it follows by classical considerations that bounded viscosity solutions are locally Lipschitz continuous, no further smoothness property can be inferred by the existing theory. The main result we show in this work assures that a viscosity solution to Equation \eqref{eq} is {\it pointwisely} of class ${C}^{\frac{4}{3-\gamma}}$ along the boundary of the non-coincidence set, $\partial \{u > 0 \}$, Theorem \ref{thmreg}. 

\par
\medskip

One should notice that for each $0 < \gamma < 3$, the regularity estimate established in Theorem \ref{thmreg} is superior than the optimal $C^{1,\frac{1}{3}}$-estimate, yet to be confirmed (or not), for infinity harmonic functions. Hence, it is clear that such a geometric, improved estimate cannot be extended inwards the non-coincidence set $\{u>0\}$. Nonetheless, such an estimate does enforce rather specific geometric information on the behavior of $u$ near the boundary of the coincidence set. By means of barriers, we show that such an estimate is optimal, Theorem \ref{nondegthm}, in the sense that $u$ detaches from its coincidence set precisely as $\text{dist}^{\frac{4}{3-\gamma}}$. This fact allows us to derive Hausdorff measure estimates  for $\partial \{u>0\}$, Corollary \ref{H est}.  

\par
\medskip

We conclude this introduction by pointing out that similar results can be derived to problems with more general absorption terms: $\Delta_\infty u = f(u)$. We have chosen to present this current article for $f(u) = \lambda (u^{+})^\gamma$ as to highlight the main novelties introduced in our analysis.

\section{Notations}


In this article we shall use classical notations and terminologies, which, for the sake of the readers,  we list below. 

The dimension of Euclidean space in which the equations and problems treated in this article are modeled into will be denoted by $n$. 

Given $\mathscr{O}$ a subset of the $\mathbb{R}^n$, we denote by $\partial\mathscr{O}$ its boundary. For $B_r(X) \subset \mathbb{R}^n$ we denote the open ball of radius $r>0$ centered at $X\in \mathbb{R}^n$. For the vectors $\vec{p}=(p_1,\cdots,p_n)$ and $\vec{q}=(q_1,\cdots,q_n)$, we consider $\langle \vec{p},\vec{q} \rangle$ the standard scalar product in $\mathbb{R}^n$ and $|\vec{p}|:=\sqrt{\langle \vec{p},\vec{p} \rangle}$ its Euclidean norm. The tensor product $\vec{p}\otimes \vec{q}$ denotes the matrix $(p_i \cdot q_j)_{1 \leq i,j \leq n}$.

For a real function $\omega$ defined in a open subset of the $\mathbb{R}^n$, we denote by 
$$
D\omega(X):=(\partial_j \omega(X))_{1 \leq j \leq n} \quad \mbox{and} \quad  D^2\omega(X):=(\partial_{ij} \omega(X))_{1 \leq i,j \leq n}
$$
its gradient and its hessian at the point $X\in \mathbb{R}^n$, where $\partial_i \omega$ is a $i$-th directional derivative of $\omega$ and $\partial_{ij}\omega$ the  $j$-th directional derivative of $\partial_i\omega$.

Fixed a domain $\Omega \subset \mathbb{R}^n$, we will call universal any positive constant that depends only on dimension, $\gamma$ and $\Omega$.

For an operator $G \colon \mathscr{O} \times \mathbb{R}^n \times \mbox{Sym}(n) \to \mathbb{R}$ and a domain $\mathscr{O}\subset \mathbb{R}^n$, a continuous function $\omega \colon \mathscr{O} \to \mathbb{R}$ is called a \textit{viscosity subsolution} of the equation 
\begin{equation}\label{eq not}
	G(X,\omega,D\omega,D^2\omega)=0 \text{ in } \mathscr{O},
\end{equation}
if whenever   $ \varphi \in C^2$ is such that $\omega-\varphi$ has a local maximum at some point $Y \in \mathscr{O}$, then there holds 
$$
	G (Y,\omega(Y),D\varphi(Y),D^2\varphi(Y))\geq 0.
$$ 
Similarly, a continuous function $\omega \colon \mathscr{O} \to \mathbb{R}$ is called a \textit{viscosity supersolution} of equation \eqref{eq not}, if    $ \varphi \in C^2$  is such that $\varphi-\omega$ has a local maximum at some point $Y \in \mathscr{O}$, then there holds 
$$
	G (Y,\omega(Y),D\varphi(Y),D^2\varphi(Y))\leq 0.
$$ 
We say $\omega$ a \textit{viscosity solution} of the $G(X,\omega,D\omega,D^2\omega)=0$ when $\omega$ is both a subsolution and a supersolution. 
\section{Preliminaries}

In this Section we make a preliminar analysis on equation \eqref{eq}. Initially, we point that, for the purposes of this article, the Thiele modulus plays no important role, and hence, hereafter, we shall take $\lambda = 1$.

We start off by verifying that any existing viscosity supersolution to \eqref{eq}, $\mathcal{L}_\infty^\gamma u \le 0$, is nonnegative.  Indeed suppose the open set $\mathscr{O}(u):=\{u<0\}$ were nonempty. Then $u$ would satisfy in $\mathscr{O}(u)$ 
\begin{eqnarray}
\label{Nonneg of u 1}
	\left \{ 
		\begin{array}{rcr}
			\Delta_{\,\infty} u   \leq  0, & \text{ in } & \mathscr{O}(u)\; \\
			u= 0, & \text{ on } & \partial \mathscr{O}(u) .
		\end{array}
	\right.
\end{eqnarray} 
By the classical comparison principle for infinity-harmonic functions, see for instance \cite{J},  $u \geq 0$ in $\mathscr{O}(u)$, which drives us to a contradiction. 

We now briefly comment on existence of a viscosity solution to the Dirichet problem \eqref{eq}. As usual it follows by an application of Perron's method once comparison principle is established. 

Indeed, let us consider the functions $\overline{u}$ and $\underline{u}$, solutions to the following boundary value problems:
\begin{equation}\nonumber
\begin{array}{ccc}
\left\{
\begin{array}{rcc}
\Delta_\infty\, \overline{u}= 0 & \mbox{in} & \Omega, \\
\overline{u} = \phi &  \mbox{on} & \partial\Omega.\\
\end{array}
\right.
&
\mbox{and}
&
\left\{
\begin{array}{rllcc}
\Delta_\infty\, \underline{u} &=& \|\phi\|_{L^\infty(\partial\Omega)}^\gamma & \mbox{in} & \Omega, \\
\underline{u} &=& \phi &  \mbox{on} & \partial\Omega.\\
\end{array}
\right.
\\
\end{array}
\end{equation}
Existence of such solutions follows of standard arguments. We note that $\overline{u}$ and $\underline{u}$ are respectively, supersolution and subsolution to \eqref{eq}. Therefore by Comparison principle, Lemma \ref{comparison principle} below, it is possible, under a direct application of Perron's method, to obtain the existence of a viscosity solution in $C(\overline{\Omega})$ of \eqref{eq}, given by
$$
	u(X):=\inf\{\omega(X)\; \vert\; \omega \;\mbox{is a supersolution of}\; \eqref{eq} \; \mbox{and} \; \underline{u}\, \leq \omega \leq \overline{u}  \; \mbox{in} \;\overline{\Omega}\}.
$$
Uniqueness also follows readily from comparison principle. We state these observations as a Theorem for future references.

\begin{theorem}[Existence and Uniquiness]
\label{existence theorem}
Let  $\Omega\subset \mathbb{R}^{n}$ be a bounded domain and $\varphi \in C\left( \partial \Omega \right)$ be a given nonnegative function. Then there exists a nonnegative function $u \in C\left( \overline{\Omega}\right)$ satisfying \eqref{eq} in the viscosity sense.  Moreover, such a solution is unique.
\end{theorem}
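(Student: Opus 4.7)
The plan is to follow the roadmap already laid out in the preliminary discussion: invoke Perron's method between two explicit barriers, using the forthcoming comparison principle (Lemma \ref{comparison principle}) as the central analytic ingredient. Once comparison is available for $\mathcal{L}_\infty^\gamma$, everything else is rather standard for degenerate elliptic viscosity theory.

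First I would carefully check that the two candidate barriers behave as advertised. The function $\overline{u}$ is $\infty$-harmonic with boundary data $\varphi \ge 0$, hence nonnegative in $\Omega$ by the comparison principle for $\infty$-harmonic functions (cited from Jensen). Therefore
\[
\mathcal{L}_\infty^{\,\gamma}\overline{u} \;=\; \Delta_\infty \overline{u} - (\overline{u}^+)^\gamma \;=\; -(\overline{u})^\gamma \;\le\; 0,
\]
so $\overline{u}$ is a viscosity supersolution of \eqref{eq}. For $\underline{u}$, since $\Delta_\infty \underline{u} = \|\varphi\|_{L^\infty}^\gamma \ge 0$, the function $\underline{u}$ is $\infty$-subharmonic and attains its maximum on $\partial\Omega$; thus $\underline{u}\le \|\varphi\|_{L^\infty}$ in $\Omega$. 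Consequently $(\underline{u}^+)^\gamma \le \|\varphi\|_{L^\infty}^\gamma = \Delta_\infty \underline{u}$, showing $\mathcal{L}_\infty^{\,\gamma}\underline{u}\ge 0$, i.e.\ $\underline{u}$ is a subsolution. A further application of comparison between $\overline{u}$ and $\underline{u}$ (they share boundary data and the strict ordering of their right-hand sides gives $\underline{u}\le \overline{u}$) ensures that the Perron class
\[
\mathcal{S} \;:=\; \bigl\{\omega \in C(\overline{\Omega})\;:\;\omega\text{ is a supersolution of }\eqref{eq},\ \underline{u}\le \omega \le \overline{u}\bigr\}
\]
is nonempty and bounded between two continuous barriers that agree with $\varphi$ on $\partial\Omega$.

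Next I would set $u(X):=\inf_{\omega\in\mathcal{S}}\omega(X)$ and run the standard Perron argument for viscosity solutions: the upper semicontinuous envelope $u^*$ is shown to be a subsolution by a bump-function/sup-convolution argument (otherwise one could push $u$ strictly down near a contact point, contradicting minimality), while the lower semicontinuous envelope $u_*$ is immediately a supersolution. Comparison between $u^*$ and $u_*$ then yields $u^* \le u_*$, so $u=u^*=u_*$ is continuous and is a viscosity solution. Continuity up to the boundary, and the identity $u=\varphi$ on $\partial\Omega$, follow from the sandwich $\underline{u}\le u \le \overline{u}$ together with $\underline{u}=\overline{u}=\varphi$ on $\partial\Omega$. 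Nonnegativity of $u$ is immediate from the argument already given in the preliminary discussion around \eqref{Nonneg of u 1}: any supersolution of \eqref{eq} is nonnegative, so in particular $u\ge 0$. Uniqueness is a direct consequence of the comparison principle applied to two candidate solutions with the same boundary data.

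The main obstacle, and the only genuinely nontrivial point, is the comparison principle for $\mathcal{L}_\infty^{\,\gamma}$; everything else in the argument is essentially formal once that is in hand. The delicate feature is that the absorption term $-(u^+)^\gamma$ is only H\"older in $u$ at the free boundary, and moreover $\Delta_\infty$ is a highly degenerate operator whose usual comparison proofs rely on doubling variables together with a strict monotonicity in the zero-th order term that is not available here. Since the theorem statement defers this to Lemma \ref{comparison principle}, my proof would simply invoke it; but in writing I would highlight that the correct sign of the absorption (proper/monotone non-increasing in $u$ in the region $\{u\ge 0\}$) is precisely what allows the standard Ishii--Jensen doubling scheme to close, reducing the problem to the Lu--Wang type comparison for $\Delta_\infty u = f$ with $f\ge 0$.
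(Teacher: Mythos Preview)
Your proposal is correct and follows essentially the same route as the paper: build the barriers $\overline{u}$ and $\underline{u}$, run Perron's method over the class of supersolutions sandwiched between them, and conclude existence, boundary continuity, nonnegativity, and uniqueness from the comparison principle in Lemma~\ref{comparison principle}. You supply more detail than the paper does (for instance, the verification that $\underline{u}\le\|\phi\|_{L^\infty}$ via the maximum principle, which is exactly why $\underline{u}$ is a subsolution), but the architecture is identical.
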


We now deliver a proof for comparison principle for the operator  $\mathcal{L}_\infty^\gamma$. The reasoning is somewhat standard in the  theory of viscosity solutions; we carry out the details for the reader's convenience.

\begin{lemma} 
\label{comparison principle}
Let $u_1$ and $u_2$ be continuous functions in $\overline{\Omega}$ satisfying 
$$
\mathcal{L}^\gamma_\infty \, u_1 \leq 0 \quad \mbox{and} \quad\mathcal{L}^\gamma_\infty  \, u_2 \geq 0 \text{ in } \Omega.
$$ 
If $u_1 \geq u_2$ on $\partial \Omega$, then $u_1 \geq u_2$ inside $\Omega$.
\end{lemma}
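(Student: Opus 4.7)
My plan is to follow the classical doubling-of-variables scheme together with the Crandall--Ishii maximum principle lemma, which is the natural machinery for second-order viscosity comparison. I argue by contradiction, assuming $M := \max_{\overline{\Omega}}(u_2 - u_1) > 0$; since $u_1 \ge u_2$ on $\partial \Omega$, this maximum is attained at some interior point.

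For $\epsilon > 0$, I introduce the penalized functional
\[
\Phi_\epsilon(x,y) \; := \; u_2(x) - u_1(y) - \frac{1}{2\epsilon}|x-y|^2
\]
on $\overline\Omega \times \overline\Omega$ and consider a maximizer $(x_\epsilon, y_\epsilon)$. Standard facts about this penalization yield, along a subsequence: $(x_\epsilon, y_\epsilon) \in \Omega \times \Omega$ for small $\epsilon$, $|x_\epsilon - y_\epsilon|^2/\epsilon \to 0$, and both $x_\epsilon$ and $y_\epsilon$ converge to a common interior maximum point $\hat x$ of $u_2 - u_1$. The Crandall--Ishii lemma then provides symmetric matrices $X_\epsilon \le Y_\epsilon$ such that, writing $p_\epsilon := (x_\epsilon - y_\epsilon)/\epsilon$, the pairs $(p_\epsilon, X_\epsilon)$ and $(p_\epsilon, Y_\epsilon)$ lie, respectively, in the closures of the second-order super- and sub-jets of $u_2$ at $x_\epsilon$ and of $u_1$ at $y_\epsilon$.

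Plugging these jets into the viscosity formulations yields the twin inequalities
\[
\langle X_\epsilon p_\epsilon, p_\epsilon\rangle \ge (u_2(x_\epsilon)^+)^\gamma \quad \text{and} \quad \langle Y_\epsilon p_\epsilon, p_\epsilon\rangle \le (u_1(y_\epsilon)^+)^\gamma,
\]
which, after subtracting and using $X_\epsilon \le Y_\epsilon$, combine into
\[
(u_2(x_\epsilon)^+)^\gamma - (u_1(y_\epsilon)^+)^\gamma \; \le \; \langle (X_\epsilon - Y_\epsilon) p_\epsilon, p_\epsilon\rangle \; \le \; 0.
\]
Letting $\epsilon \to 0$ and recalling that any supersolution is nonnegative by the preliminary discussion preceding this lemma, I obtain $(u_2(\hat x)^+)^\gamma \le u_1(\hat x)^\gamma$ while $u_2(\hat x) > u_1(\hat x) \ge 0$. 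The strict monotonicity of $t \mapsto t^\gamma$ on $[0,\infty)$ then delivers the contradiction.

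The delicacy I foresee is the strong degeneracy of $\Delta_\infty$: the symbol $\langle Xp, p\rangle$ collapses when $p = 0$, so one might worry about the case $p_\epsilon \to 0$. However, this situation turns out to be benign here, because the subsolution inequality would then force $(u_2(x_\epsilon)^+)^\gamma \le 0$, hence $u_2(\hat x) \le 0$ in the limit, contradicting $u_2(\hat x) > u_1(\hat x) \ge 0$. Consequently, the proof reduces to the Crandall--Ishii bookkeeping combined with the monotonicity of the absorption term $-(r^+)^\gamma$ in $r$, which supplies the correct sign for comparison.
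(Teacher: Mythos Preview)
Your proof is correct and follows essentially the same route as the paper's own argument: contradiction via doubling of variables, the Crandall--Ishii maximum principle to obtain matrices $X_\epsilon \le Y_\epsilon$ with a common test gradient $p_\epsilon$, the resulting chain $(u_2(x_\epsilon)^+)^\gamma \le \langle X_\epsilon p_\epsilon,p_\epsilon\rangle \le \langle Y_\epsilon p_\epsilon,p_\epsilon\rangle \le (u_1(y_\epsilon)^+)^\gamma$, and finally passage to the limit using the nonnegativity of the supersolution $u_1$ together with the monotonicity of $t\mapsto t^\gamma$ on $[0,\infty)$. The only cosmetic difference is that the paper rewrites $u_2(x_\epsilon) = M_\epsilon + u_1(y_\epsilon) + \tfrac{1}{2\epsilon}|x_\epsilon-y_\epsilon|^2$ before taking the $\gamma$-th root and letting $\epsilon\to 0$, while you pass to the limit first and invoke monotonicity afterwards; your additional remark on the degenerate case $p_\epsilon = 0$ is a welcome sanity check that the paper leaves implicit.
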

 \begin{proof}
 Let us suppose, for the purpose of contradiction, that there exists $M_0>0$ such that $M_0= \sup\limits_{\overline{\Omega}}\left(u_2-u_1 \right)$. For each $\varepsilon>0$ small,  define
$$
M_\varepsilon:= \sup_{\overline{\Omega}\times \overline{\Omega}} \left(u_2(X)-u_1(Y)-\frac{1}{2\varepsilon}|X-Y|^2 \right) < \infty.
$$
Let $(X_\varepsilon,Y_\varepsilon) \in \overline{\Omega}\times \overline{\Omega}$  be a point where the maximum is attained. It follows as in \cite[lemma 3.1]{UG} that
\begin{equation}\label{e0}
\lim\limits_{\varepsilon \to 0} \frac{1}{\varepsilon}|X_\varepsilon-Y_\varepsilon|^2=0, \quad \mbox{and} \quad
\lim\limits_{\varepsilon \to 0} M_\varepsilon = M_0.
\end{equation}
In particular we must have
\begin{equation}\label{e3}
\lim\limits_{\varepsilon \to 0} X_\varepsilon = \lim\limits_{\varepsilon \to 0} Y_\varepsilon =: Z_0
\end{equation}
where $u_2(Z_0)-u_1(Z_0)=M_0$. Moreover, one observes that  
$$
	M_0>0\geq \sup_{\partial\Omega} (u_2-u_1),
$$ 
hence $X_\varepsilon \in \Omega'$ for some interior domain $\Omega' \Subset \Omega$ and $\varepsilon>0$ sufficiently small. Therefore, by \cite[Theorem 3.2]{UG} there exist $\mathscr{M},\mathscr{N} \in \mathscr{S}_n$ with
\begin{equation}\label{e1}
\left(\frac{X_\varepsilon-Y_\varepsilon}{\varepsilon}, \mathscr{M} \right) \in \overline{J}^{2,+}_{\Omega} u_2(X_\varepsilon) \quad \mbox{and} \quad \left(\frac{Y_\varepsilon-X_\varepsilon}{\varepsilon}, \mathscr{N} \right) \in \overline{J}^{2,-}_{\Omega} u_1(Y_\varepsilon)
\end{equation}
such that,
\begin{equation}\label{e2}
-\frac{3}{\varepsilon}
\left(
\begin{array}{cc}
I & 0 \\
0 & I \\
\end{array}
\right)
\leq
\left(
\begin{array}{cc}
\mathscr{M} & 0 \\
0 & \mathscr{N} \\
\end{array}
\right)
\leq
\frac{3}{\varepsilon}
\left(
\begin{array}{cc}
I & -I \\
-I & I \\
\end{array}
\right).
\end{equation}
In particular, $\mathscr{M} \leq \mathscr{N}$. By \eqref{e1} and \eqref{e2}, we obtain
$$
	\begin{array}{lll}
		(u_2(X_\varepsilon)^{+})^\gamma &\leq&  \displaystyle \mathscr{M} \left(\frac{X_\varepsilon-Y_\varepsilon}{\varepsilon} \right) \cdot \left(\frac{X_\varepsilon-Y_\varepsilon}{\varepsilon} \right) \\
		&\leq& \displaystyle \mathscr{N} \left(\frac{Y_\varepsilon-X_\varepsilon}{\varepsilon} \right) \cdot \left(\frac{Y_\varepsilon-X_\varepsilon}{\varepsilon} \right) \\
		&\leq& \displaystyle (u_1(Y_\varepsilon)^{+})^\gamma.
	\end{array}
$$
Therefore,
$$
\left(M_\varepsilon+u_1(Y_\varepsilon)+(2\varepsilon)^{-1} |X_\varepsilon-Y_\varepsilon|^2 \right)^{+} \leq u_1(Y_\varepsilon)^{+}.
$$
By \eqref{e0} and \eqref{e3} and letting $\varepsilon \to 0$ in the estimate above gives
$$
(M_0+u_1(Z_0))^+ \leq u_1(Z_0)^+
$$
which drives us to a contradiction since $u_1 \geq 0$ and $M_0>0$, by assumption.
\end{proof}

\section{Geometric regularity estimates}

As previously mentioned, viscosity solutions to 
\begin{equation}\label{localeq}
\mathcal{L}^{\,\gamma}_\infty \, u  = 0 \quad \mbox{in} \quad \Omega,
\end{equation}
for $0 \leq \gamma < 3$, are locally Lipschitz continuous. This is the optimal regularity estimated available in the literature -- there is hope to show $C^{1,\alpha}$ estimates for some $0<\alpha\le 1/3$, but certainly not beyond that. Surprisingly, in this Section we show a sharp, improved regularity estimate for $u$ along its free plateaus boundary $\partial \{u>0\} \cap \Omega$. The proof is based on a flatness improvement argument inspired by \cite{T, T1}; see also \cite{T2} for improved estimates that hold solely along {\it nonphysical} free interfaces. 

Next Lemma provides a universal way to flatten a solution near a plateaus boundary point. In the sequel we shall apply such a Lemma in dyadic balls as to obtain the aimed regularity estimate at free plateaus boundary points. 

\begin{lemma}[Flattening solutions]\label{comp} Given $\mu>0$, there exists a number $\kappa_\mu>0$, depending only on $\mu$ and dimension such that  if $v \in C(B_1)$ satisfies 
$$
	v(0)=0, \quad 0\leq v \leq 1 \quad \mbox{in} \quad B_1  
$$
and 
$$
	\Delta_{\,\infty} v - \kappa^4 (v^+)^\gamma = 0 \quad \mbox{in} \quad B_1,
$$
for  $0 < \kappa \leq \kappa_\mu$, then
$$
\sup\limits_{B_{1/2}} v \leq \mu.
$$
\end{lemma}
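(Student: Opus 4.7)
The plan is to prove the lemma by a contradiction-compactness argument that is by now standard in tangential analysis of free boundary problems, exploiting stability of viscosity solutions under uniform limits together with the rigidity of non-negative infinity harmonic functions that vanish at an interior point.

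Suppose, for contradiction, that the conclusion fails. Then there would exist $\mu_0 > 0$ and sequences $\kappa_j \searrow 0$ and $v_j \in C(B_1)$ satisfying
\[
v_j(0) = 0, \quad 0 \leq v_j \leq 1 \text{ in } B_1, \quad \Delta_\infty v_j - \kappa_j^4 (v_j^+)^\gamma = 0 \text{ in } B_1,
\]
yet $\sup_{B_{1/2}} v_j > \mu_0$ for every $j$. The first step is to produce a convergent subsequence. Because $\kappa_j^4 (v_j^+)^\gamma \leq \kappa_1^4$ is uniformly bounded, each $v_j$ is a viscosity solution of an inhomogeneous infinity laplacian equation with uniformly bounded right-hand side; the Lipschitz estimates of Evans \cite{E} then yield a universal constant $C$ with $\|v_j\|_{\mathrm{Lip}(B_{3/4})} \leq C$. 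By Arzelà–Ascoli, a subsequence (still denoted $v_j$) converges locally uniformly in $B_{3/4}$ to some $v_\infty \in C(B_{3/4})$ satisfying $v_\infty(0) = 0$, $0 \leq v_\infty \leq 1$, and $\sup_{B_{1/2}} v_\infty \geq \mu_0 > 0$.

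The second step is to pass the equation to the limit. Since $\kappa_j^4 (v_j^+)^\gamma \to 0$ uniformly in $B_{3/4}$, the standard stability theory for viscosity solutions ensures that $v_\infty$ is a viscosity solution of the homogeneous equation $\Delta_\infty v_\infty = 0$ in $B_{3/4}$; that is, $v_\infty$ is infinity harmonic. The final step invokes the rigidity of non-negative infinity harmonic functions: any non-negative viscosity solution to $\Delta_\infty w = 0$ in a domain that attains the value $0$ at an interior point must be identically zero there (strong minimum principle, which follows from Harnack's inequality for $\infty$-harmonic functions). Applied at the origin, this forces $v_\infty \equiv 0$ in $B_{3/4}$, contradicting $\sup_{B_{1/2}} v_\infty \geq \mu_0 > 0$.

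The only delicate point in this scheme is ensuring both ingredients of the compactness step are available in the generality needed: the uniform Lipschitz bound for viscosity solutions of $\Delta_\infty v = f$ with $f \in L^\infty$, and the stability of viscosity (sub/super)solutions under uniform convergence when the right-hand side depends on $v$ in a continuous way. Both are classical in the literature on the infinity laplacian, so the argument reduces to packaging them with the strong minimum principle to conclude, after selecting $\kappa_\mu$ as the threshold $\kappa_j$ at which the dichotomy must break.
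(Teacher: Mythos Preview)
Your argument is correct and is essentially the same contradiction--compactness proof given in the paper: uniform Lipschitz bounds yield a locally uniformly convergent subsequence, stability of viscosity solutions passes the equation to the limit $\Delta_\infty v_\infty = 0$, and the strong minimum principle forces $v_\infty \equiv 0$, contradicting $\sup_{B_{1/2}} v_\infty \ge \mu_0$. The only differences are cosmetic (the choice of intermediate radius and your more explicit invocation of Arzel\`a--Ascoli and stability); as a minor aside, the Lipschitz estimate in reference~\cite{E} is due to Lindgren rather than Evans.
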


\begin{proof}
Let us suppose, for the sake of contradiction, that there exists $\mu_0>0$ and sequences $\{v_\iota\}_{\iota \in \mathbb{N}}$, $\{\kappa_\iota\}_{\iota \in \mathbb{N}}$ satisfying
$$
0\leq v_\iota \leq 1, \quad v_\iota(0)=0 
$$
and
$$
\Delta_{\,\infty} v_\iota - \kappa_\iota^4 (v_\iota^+)^\gamma=0 \quad \mbox{for} \quad \kappa_\iota = \mbox{o}(1), 
$$
such that,
\begin{equation}\label{supcomp}
\sup\limits_{B_{1/2}} v_\iota > \mu_0.
\end{equation}
By Lipschitz estimates, the sequence $\{v_\iota\}_{\iota \in \mathbb{N}}$ is pre-compact in the $C^{0,1}(B_{1/2})$ topology. Up to a subsequence, $v_\iota \to v_\infty$ locally uniform in $B_{2/3}$. Moreover, we have $v_\infty(0)=0$, $0\leq v_\infty \leq 1$ and
\begin{equation}
\Delta_{\,\infty} v_\infty = 0 \quad \mbox{in} \quad B_1.
\end{equation}
Therefore, by the maximum principle for infinity harmonic functions, we obtain $v_\infty \equiv 0$. This give us a contradiction to \eqref{supcomp}, if we choose $\iota \gg 1$.
\end{proof}

\begin{theorem}\label{thmreg} Let $u$ be a viscosity solution to equation \eqref{localeq} and $X_0 \in \partial\{u>0\} \cap \Omega$.  There exists a positive constant $C>0$ depending on, $\|u\|_{L^{\infty}(\Omega)}$, $(3-\gamma)$ and  $\dist (X_0, \partial \Omega)$, such that 
\begin{equation}\label{conclusion thmreg}
	u(X) \leq C \cdot |X-X_0|^{\frac{4}{3-\gamma}}
\end{equation}
for $X \in \{u>0\}$ near $X_0$.
\end{theorem}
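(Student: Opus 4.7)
The plan is to iterate Lemma~\ref{comp} along a dyadic sequence of balls centered at $X_0$, exploiting the intrinsic scaling of the equation $\Delta_\infty u = u^\gamma$. After translation, I assume $X_0 = 0$; since $X_0 \in \partial\{u>0\}$ and $u \in C(\overline\Omega)$, we have $u(0)=0$. First I would normalize: pick $r_0 \in (0, \tfrac12 \mathrm{dist}(X_0,\partial\Omega)]$ and set $v(x) := u(r_0 x)/K$ for $x \in B_1$, with $K>0$ to be selected. A routine computation using that $\Delta_\infty$ is $3$-homogeneous in the unknown and $4$-homogeneous under dilation yields
\[
\Delta_\infty v - \frac{r_0^4}{K^{\,3-\gamma}}\, (v^+)^\gamma = 0 \quad \text{in } B_1.
\]
Fixing $\mu := 2^{-4/(3-\gamma)}$, letting $\kappa_\mu$ be the constant from Lemma~\ref{comp}, and choosing $K := \max\bigl\{\|u\|_{L^\infty(\Omega)},\, \kappa_\mu^{-4/(3-\gamma)}\, r_0^{\,4/(3-\gamma)}\bigr\}$ ensures $v(0)=0$, $0 \le v \le 1$, and absorption coefficient at most $\kappa_\mu^{\,4}$, so Lemma~\ref{comp} applies to $v$.

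The core of the argument is the induction
\[
\sup_{B_{2^{-k}}} v \le \mu^{k}, \qquad k = 0, 1, 2, \ldots
\]
The base case is trivial. For the inductive step, define the rescaled function $v_k(x) := \mu^{-k}\, v(2^{-k} x)$ on $B_1$; by the induction hypothesis $0 \le v_k \le 1$ and $v_k(0)=0$. The choice $\mu = 2^{-4/(3-\gamma)}$ is precisely the one for which $\mu^{\,3-\gamma}\cdot 2^{4} = 1$, and this relation is exactly what makes the absorption coefficient invariant under the rescaling $v \mapsto v_k$. Thus $v_k$ satisfies the very same equation as $v$ with the very same coefficient, and Lemma~\ref{comp} yields $\sup_{B_{1/2}} v_k \le \mu$, i.e.\ $\sup_{B_{2^{-(k+1)}}} v \le \mu^{k+1}$. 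Interpolating between dyadic radii (for $|x| \in (2^{-(k+1)}, 2^{-k}]$ one estimates $v(x) \le \mu^{k} \le 2^{\,4/(3-\gamma)}\,|x|^{\,4/(3-\gamma)}$) and then undoing the normalization delivers \eqref{conclusion thmreg} with a constant $C = C(\|u\|_{L^\infty(\Omega)}, \gamma, \mathrm{dist}(X_0, \partial\Omega))$.

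The main obstacle I anticipate is identifying the correct dyadic scaling, namely the unique value of $\mu$ that leaves the absorption coefficient invariant across every step of the induction. This is precisely what singles out the sharp Hölder exponent $\tfrac{4}{3-\gamma}$; algebraically it balances the $4$-homogeneity of $\Delta_\infty$ in the length variable against the shifted homogeneity $3-\gamma$ contributed by the absorption. Once $\mu$ is correctly pinned down, every rescaled function $v_k$ inherits the same PDE with the same constant $\kappa_\mu$, so Lemma~\ref{comp} can be invoked indefinitely and no further subtlety arises. A minor bookkeeping point is verifying that the final constant $C$ depends only on the data advertised in the statement, which is transparent from the explicit formula for $K$.
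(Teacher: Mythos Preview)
Your proposal is correct and follows essentially the same route as the paper: normalize so that Lemma~\ref{comp} applies with the choice $\mu = 2^{-4/(3-\gamma)}$, then iterate dyadically using that this particular $\mu$ renders the absorption coefficient scale-invariant, and finally interpolate between dyadic radii. The only cosmetic differences are that the paper picks the normalization constants $\tau,\rho$ so that the coefficient equals $\kappa_\mu^4$ exactly (rather than your $K=\max\{\cdots\}$ giving coefficient $\le \kappa_\mu^4$), and defines the iterates recursively as $\omega_k(X)=2^{4/(3-\gamma)}\omega_{k-1}(X/2)$ rather than directly as $v_k(x)=\mu^{-k}v(2^{-k}x)$; these are equivalent formulations of the same argument.
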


\begin{proof}
We assume, with no loss of generality, that $X_0=0$ and $B_{1} \Subset\Omega$. Let us define
$$
\omega_1(X):= \tau \,u\left( \rho X \right) \quad \mbox{in} \quad B_{1},
$$
for $\tau>0$ and $\rho>0$, constants to be determined universally. From the equation satisfied by $u$, we easily verify that $\omega_1$ satisfies
\begin{equation}\label{eqcomp}
\Delta_\infty\, \omega_1 - \tau^{3-\gamma}\rho^4(\omega_1^+)^{\gamma}=0,
\end{equation}
in the viscosity sense. If $\kappa_\star>0$ is the universal constant granted by previous Lemma \ref{comp} when one takes $\mu=2^{-\frac{4}{3-\gamma}}$, we make the following choices in the definition of $\omega_1$:
$$
\tau := \|u\|_{L^\infty(\Omega)}^{-1} \quad \mbox{and} \quad \rho := \kappa_\mu \cdot \tau^{-\frac{3-\gamma}{4}}.
$$ 
With such a (lucky) selection, $\omega_1$ fits into the framework of Lemma $\ref{comp}$, which ensures that
$$
	\sup_{B_{1/2}} \omega_1 \leq 2^{-\frac{4}{3-\gamma}}.
$$
In the sequel, we set
$$
\omega_{\,2}(X):= 2^{\frac{4}{3-\gamma}} \,\omega_1 \left( 2^{-1}X \right) \quad \mbox{in} \quad B_1.
$$
We note that $\omega_2$ satisfies $\omega_2(0) = 0$, $0 \le \omega_2 \le 1$ and 
$$
\Delta_\infty\, \omega_2 - \kappa_\star^4(\omega_2^+)^{\gamma}=0.
$$
That is, we can apply Lemma \ref{comp} to $\omega_2$ as well, yielding, after rescaling, 
$$
\sup_{B_{1/4}} \omega_{1} \leq 2^{-2 \cdot \frac{4}{3-\gamma}}.
$$
Now, we argue by finite induction. For each $k \in \mathbb{N}$, we define
$$
	\omega_{\,k}(X):= 2^{\frac{4}{3-\gamma}} \,\omega_{\,k-1} \left( 2^{-1}X \right).
$$
By the same reasoning employed above, we  verify that $\omega_{\,k}(X)$ fits into the hypotheses of Lemma \ref{comp}, which gives after rescaling
\begin{equation}\label{sup}
\sup_{B_{2^{-k}}} \omega_1 \leq 2^{-k \,\frac{4}{3-\gamma}}. 
\end{equation}
Finally, fixed a radius $0 < r \leq \dfrac{\rho}{2}$, we choose $k \in \mathbb{N}$ such that, 
$$
2^{-(k+1)} < \frac{r}{\rho} \leq 2^{-k}.
$$
Therefore, we estimate
$$
\sup\limits_{B_{r}} u \leq \sup\limits_{B_{\rho\, 2^{-k}}} u = \tau^{-1}\sup\limits_{B_{2^{-k}}} \omega_1,
$$
yielding, by \eqref{sup},
\begin{equation}
\begin{array}{ccl}
\sup\limits_{B_{r}} u & \leq & \tau^{-1} \cdot 2^{-k \, \frac{4}{3-\gamma}} \\
& \leq & \left(2^{\frac{4}{3-\gamma}}\tau^{-1} \right) \cdot \, 2^{-(k+1) \, \frac{4}{3-\gamma}} \\
& \leq & \left((\rho\tau)^{-1}2^{\frac{4}{3-\gamma}}\right) \cdot r^{\;\frac{4}{3-\gamma}}. \\
\end{array}
\end{equation}
This concludes the proof of Theorem \ref{thmreg}.
\end{proof}

\begin{figure}[h!]
\begin{center}
\scalebox{0.8} 
{
\begin{pspicture}(0,-2.7625)(15.106281,2.7625)
\definecolor{color1}{rgb}{0.9411764705882353,0.9254901960784314,0.9254901960784314}
\definecolor{color663}{rgb}{0.6,0.6,0.6}
\definecolor{color262}{rgb}{0.6509803921568628,0.6235294117647059,0.6235294117647059}
\definecolor{color241}{rgb}{0.5568627450980392,0.5215686274509804,0.5215686274509804}
\definecolor{color471}{rgb}{0.6901960784313725,0.6470588235294118,0.6470588235294118}
\definecolor{color495}{rgb}{0.6627450980392157,0.6078431372549019,0.6078431372549019}
\definecolor{color472}{rgb}{0.6941176470588235,0.611764705882353,0.611764705882353}
\definecolor{color477}{rgb}{0.7294117647058823,0.6549019607843137,0.6549019607843137}
\usefont{T1}{ppl}{m}{n}
\rput(0.6969061,-2.1235113){\color{color1}.}
\usefont{T1}{ppl}{m}{n}
\rput(2.9569058,1.0764886){\color{color1}.}
\psframe[linewidth=0.01,dimen=outer](10.783281,2.7625)(0.003281,-2.7625)
\usefont{T1}{ptm}{m}{n}
\rput(0.8141117,-1.6875){$\{u > 0\}$}
\usefont{T1}{ptm}{m}{n}
\rput(5.5941114,2.0125){$u$}
\psline[linewidth=0.034cm,linecolor=color663](10.765,-1.3575)(0.0,-1.3425)
\usefont{T1}{ptm}{m}{n}
\rput(9.734114,-1.7075){$\{u = 0\}$}
\psline[linewidth=0.04](2.5317678,2.7225)(2.5117679,2.7025)(1.1117679,0.2225)(3.5117679,2.6025)(1.891768,-0.0375)(4.331768,2.6225)(2.7747452,-0.16855443)(4.951768,2.1225)(3.611768,-0.3175)(5.431768,1.6025)(4.311768,-0.5375)(5.951768,0.9625)(5.191768,-0.6975)(6.391768,0.4225)
\usefont{T1}{ptm}{m}{n}
\rput(13.168781,2.0875){\tiny Zoom +}
\psdots[dotsize=0.12](7.741406,-1.3325)
\psframe[linewidth=0.02,linecolor=color241,dimen=outer](13.583,2.2275)(11.123,0.3675)
\psline[linewidth=0.1cm,linecolor=color262](11.123,0.7275)(13.563,0.7275)
\psbezier[linewidth=0.08](11.143,2.2275)(11.443,1.3875)(12.703,0.7275)(13.203,0.7675)
\usefont{T1}{ptm}{m}{n}
\rput(12.685813,1.6325){$\sim d^{\frac{4}{3-\gamma}}$}
\psline[linewidth=0.04](6.371768,0.4225)(5.851768,-0.8175)(6.691768,-0.0175)(6.391768,-0.9775)(7.031768,-0.4375)(6.791768,-1.0575)(7.211768,-0.6975)(7.091768,-1.1175)(7.371768,-0.8575)(7.331768,-1.1575)(7.531768,-1.0175)(7.531768,-1.2375)(7.6517677,-1.1775)(7.711768,-1.3175)
\psframe[linewidth=0.02,linecolor=color241,dimen=outer](7.971768,-1.1725)(7.503,-1.4975)
\psline[linewidth=0.008cm,linecolor=color471](7.531768,-1.1975)(11.111768,2.2025)
\psline[linewidth=0.008cm,linecolor=color472](7.531768,-1.4775)(11.131768,0.3825)
\psline[linewidth=0.008cm,linecolor=color495](7.931768,-1.2175)(11.111768,0.7625)
\psline[linewidth=0.008cm,linecolor=color477](7.931768,-1.4775)(13.571768,0.3825)
\psdots[dotsize=0.14](13.211768,0.7625)
\end{pspicture} 
}
\end{center}
\caption{This picture is a caricature of the improved regularity estimate: by zooming-in around a free boundary point, one sees a $C^{\frac{4}{3-\gamma}}$ surface leading $u$ towards a smooth landing on the plateaus.}
\end{figure}
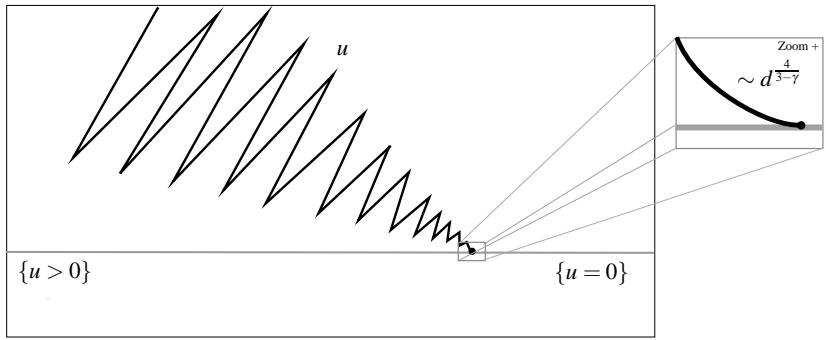

\begin{remark}\label{rmk1} A careful scrutiny of the proof of Theorem \ref{regthm} revels that the same regularity estimate holds for equations with non constant,  bounded Thiele modulus:
$$
	\Delta_\infty u = \lambda(X) \cdot u^\mu.
$$
In this case, the constant $C>0$ appearing in \eqref{conclusion thmreg},  which bounds the $C^{\frac{4}{3-\mu}}$-growth estimate of $u$ away from the touching ground, depends only on $\|u\|_{L^{\infty}(\Omega)}$, $(3-\gamma)$, $\dist (X_0, \partial \Omega)$ and $\|\lambda\|_{L^{\infty}(\Omega)}$. This remark will be used in the future.
\end{remark}

We conclude this Section with an asymptotic Liouville type classification result. A stronger, quantitative version of this Theorem will be delivered later.

\begin{theorem}\label{bernstein}
Let $u$ be a viscosity solution to
$$
\mathcal{L}^{\,\gamma}_{\, \infty} \, u = 0 \quad \mbox{in} \quad \mathbb{R}^n
$$
with $u(0)=0\,$. If $u(X)=\mathrm{o}(|X|^{\frac{4}{3-\gamma}})$ as $|X|\to \infty$, then $u\equiv 0$.
\end{theorem}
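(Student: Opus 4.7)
The plan is to argue by contradiction through a scaling argument driven by the geometric regularity estimate of Theorem \ref{thmreg}. Suppose $u \not\equiv 0$. The same reasoning as in the preliminaries shows that $u$ is nonnegative; hence $\{u>0\}$ is a nonempty open set that does not contain the origin. Pick any $Y_0 \in \partial\{u>0\}$ and translate: replacing $u$ by $u(\cdot + Y_0)$ yields a viscosity solution on $\mathbb{R}^n$ with $u(0)=0$, with $0 \in \partial\{u>0\}$, and still obeying the growth hypothesis $u(X) = \mathrm{o}(|X|^{4/(3-\gamma)})$ at infinity.

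Next I would introduce the scale-invariant family
$$
u_R(X) := \frac{u(RX)}{R^{4/(3-\gamma)}}, \qquad R>0.
$$
Because $\Delta_\infty(\alpha\,u(\beta\,\cdot)) = \alpha^3\beta^4 \Delta_\infty u$ and $(\alpha u^+)^\gamma = \alpha^\gamma (u^+)^\gamma$, the exponent $4/(3-\gamma)$ is exactly the one that preserves the nonlinear equation, so $\mathcal{L}^\gamma_\infty u_R = 0$ on $\mathbb{R}^n$ for every $R>0$. One has $u_R(0)=0$, $0\in\partial\{u_R>0\}$, and the growth hypothesis translates into
$$
\|u_R\|_{L^\infty(B_r)} \;=\; \frac{\|u\|_{L^\infty(B_{rR})}}{R^{4/(3-\gamma)}} \;\longrightarrow\; 0 \quad \text{as } R\to\infty,
$$
for every fixed $r>0$.

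Applying Theorem \ref{thmreg} to $u_R$ at the free-boundary point $0$, say with $\Omega = B_2$, and tracking the constants that appear in its proof, yields
$$
u_R(Y) \;\leq\; C_R\,|Y|^{4/(3-\gamma)} \quad \text{for } |Y|\leq \tfrac{1}{2}\rho_R,
$$
where $C_R$ scales as $\|u_R\|_{L^\infty(B_2)}^{(1+\gamma)/4}$ and $\rho_R$ as $\|u_R\|_{L^\infty(B_2)}^{(3-\gamma)/4}$; in particular $C_R \to 0$. Undoing the scaling through the change of variables $Y = X/R$ produces
$$
u(X) \;\leq\; C_R\,|X|^{4/(3-\gamma)} \quad \text{for } |X|\leq \tfrac{1}{2}R\rho_R,
$$
and a brief computation shows that $R\rho_R$ equals a universal multiple of $\|u\|_{L^\infty(B_{2R})}^{(3-\gamma)/4}$.

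The delicate step, and the main obstacle, is to check that the range of validity $\tfrac{1}{2}R\rho_R$ does not collapse as $R\to\infty$. Since $u\not\equiv 0$, the quantity $\|u\|_{L^\infty(B_{2R})}$ is bounded below by a positive number for all sufficiently large $R$ (it either tends to $+\infty$ or stabilizes at a positive constant), so there is a fixed radius $r_\star>0$ such that the inequality $u(X)\leq C_R|X|^{4/(3-\gamma)}$ holds throughout $B_{r_\star}$ for every large $R$. Letting $R\to\infty$ with $C_R\to 0$ then forces $u\equiv 0$ on $B_{r_\star}$, which contradicts $0\in\partial\{u>0\}$. Therefore $u\equiv 0$.
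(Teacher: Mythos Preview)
Your argument is correct and follows essentially the same route as the paper: rescale via $u_R(X)=R^{-4/(3-\gamma)}u(RX)$, use the growth hypothesis to force $\|u_R\|_{L^\infty}\to 0$, feed this into Theorem~\ref{thmreg}, and derive a contradiction. The paper proceeds the same way, ending with the chain of inequalities
\[
\frac{u(Z_0)}{|Z_0|^{4/(3-\gamma)}}\;\le\;\sup_{B_{1/2}}\frac{u_\kappa(X)}{|X|^{4/(3-\gamma)}}\;\le\;\frac{u(Z_0)}{100\,|Z_0|^{4/(3-\gamma)}}.
\]

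One point where your write-up is actually more careful than the paper's: the paper asserts that the estimate $u_\kappa(X)\le \mathrm{o}(1)\,|X|^{4/(3-\gamma)}$ holds on all of $B_{1/2}$, whereas the proof of Theorem~\ref{thmreg} only yields it on $B_{\rho_\kappa/2}$ with $\rho_\kappa=\kappa_\mu\|u_\kappa\|_{L^\infty}^{(3-\gamma)/4}\to 0$. You resolve this by undoing the scaling and observing that $R\rho_R=\kappa_\mu\|u\|_{L^\infty(B_{2R})}^{(3-\gamma)/4}$ stays bounded below once $u\not\equiv 0$, so a \emph{fixed} ball $B_{r_\star}$ is covered; this is exactly the missing justification. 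Your preliminary translation to a point of $\partial\{u>0\}$ is a harmless precaution---the proof of Theorem~\ref{thmreg} in fact only uses $u(X_0)=0$, so one could work directly at the origin---but it makes the final contradiction (``$u\equiv 0$ on $B_{r_\star}$ yet $0\in\partial\{u>0\}$'') clean.
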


\begin{proof}
For each positive number $\kappa \gg 1$, let us define
$$
u_{\kappa}(X):= u(\kappa\, X)\,\kappa^{-\frac{4}{3-\gamma}}.
$$
It is easy to check that
$$
\mathcal{L}^{\,\gamma}_{\, \infty} \, u_\kappa = 0  \mbox{ in } B_1,
$$
and $u_\kappa(0)=0$. Moreover, we note that 
$$
\|u_\kappa\|_{L^\infty(B_1)} = \mbox{o}\,(1).
$$ 
In fact, for each $\kappa \in \mathbb{N}$, let $X_\kappa \in \mathbb{R}^n$ be such that $u_\kappa(X_\kappa)= \sup\limits_{B_1}u_\kappa$. If $\lim\limits_{\kappa \to \infty} \kappa X_\kappa = \infty$, by the above assumption, we obtain
$$
u_\kappa(X_\kappa) \leq |\kappa X_\kappa|^{-\frac{4}{3-\gamma}}u(\kappa X_\kappa) \to 0, \quad \mbox{ as }\kappa \to \infty. 
$$
If the sequence $\{\kappa X_\kappa\}$ remains bounded, we easily obtain the limit above for $u_\kappa(X_\kappa)$. Applying Theorem \ref{thmreg} we obtain
\begin{equation}\label{odeum} 
u_\kappa(X) \leq \mbox{o}\,(1)\cdot |X|^{\frac{4}{3-\gamma}} \quad \mbox{in} \quad B_{1/2}.
\end{equation}
Now, if we assume that there is a $Z_0 \in \mathbb{R}^n$ such that $u(Z_0)>0$, we obtain
from \eqref{odeum}, 
	\begin{equation}\label{Beq05.5}
			\sup\limits_{B_{1/2}} \dfrac{u_\kappa(X)}{|X|^{\frac{4}{3-\gamma}}} \le \dfrac{{u(Z_0)}}{100 |Z_0|^{\frac{4}{3-\gamma}}},
	\end{equation}	
	provided $\kappa \gg 1$. We now estimate, for $\kappa \gg 2|Z_0|$, 
	\begin{equation}\label{Beq06}
			\dfrac{u(Z_0)}{|Z_0|^{\frac{4}{3-\gamma}}} \le \sup\limits_{B_{\kappa/2}} \dfrac{u(X)}{|X|^{\frac{4}{3-\gamma}}} \\
			\le  \sup\limits_{B_{1/2}} \dfrac{u_\kappa(X)}{|X|^{\frac{4}{3-\gamma}}} \\
		 \le  \dfrac{{u(Z_0)}}{100 |Z_0|^{\frac{4}{3-\gamma}}},		 
	\end{equation}	
	which finally drives us to a contradiction, completing the proof of Theorem \ref{bernstein}. 
\end{proof}

 \section{Radial Analysis}

In this intermediary section, we make a short pause as to analyze the radial boundary value problem
\begin{equation}\label{rad eq}
	\left \{ 
		\begin{array}{rlll}
			\Delta_\infty u &=& \lambda (u^{+})^\gamma &\text{ in } B_R(X_0) \\
			u &=& c &\text{ on } \partial B_R(X_0),
		\end{array}
	\right.
\end{equation}
where $0 < c, \lambda < \infty$ are constants and $X_0 \in \mathbb{R}^n$. Herein we  consider an arbitrary Thiele modulus $\lambda>0$, as to amplify the range of our analysis. 

Initially we observe that, by uniqueness and $\mathcal{O}(n)$ invariance of the infinity laplacian, it is plain that the solution of such a boundary value problem is radially symmetric. Indeed, for any $O \in \mathcal{O}(n)$, the function $v(X - X_0) := u(O(X-X_0))$ solves the same boundary value problem, hence, by uniqueness, $v(X) = u(X)$. Since $O \in \mathcal{O}(n)$ was taken arbitrary, it does follow that $u$ is radially symmetric. 

We then consider the following ODE related to \eqref{rad eq}, 
\begin{equation}\label{rad edo}
			h''(h')^2 = \lambda (h^{+})^\gamma \quad \text{ in }\, (0,T)
\end{equation}
satisfying the initial conditions: 	$h(0)=0$ and $h(T)=c$. Solving \eqref{rad edo} we obtain the solution $\,h(s)=\tau(\lambda,\gamma) \cdot s^{\,\frac{4}{3-\gamma}}$, where 
\begin{equation}\label{tau}
	\tau(\lambda, \gamma)=  \sqrt[3-\gamma]{\lambda \cdot \frac{(3-\gamma)^4}{64(1+\gamma)}}
	\quad \mbox{and} \quad 
	\left( \frac{c}{\tau(\lambda,\gamma)} \right)^{\frac{3-\gamma}{4}}=:T. 	
\end{equation}
Fixed $X_0 \in \mathbb{R}^n$ and $0<r<R$, let us assume the {\it dead-core compatibility condition}
\begin{equation}\label{DC cond1}
	R > T.
\end{equation}
Define the following radially symmetric function $u : B_R(X_0) \setminus B_r(X_0) \to \mathbb{R}$ given by
$$
u(X):=h\left(|X-X_0|-r\, \right),
$$
where $r=R-T$. One easily verifies that $u$ solves pointwise the equation
\begin{equation}\nonumber
			\Delta_\infty u =  \lambda (u^{+})^\gamma \quad \text{ in } \quad B_R(X_0) \setminus B_r(X_0).
\end{equation}
The boundary conditions: $u \equiv 0$  on  $\partial B_r$ and $u \equiv c$ on $\partial B_R$ are also satisfied. Moreover, by the construction, for each $Z\in \partial B_r(X_0)$, we obtain
$$
\lim_{X \to Z} \nabla u(X) = h'(0^+).\frac{Z}{|Z|}=0.
$$
Thus, extending $u\equiv 0$ in $B_r(X_0)$, we obtain a function in $B_R(X_0)$ satisfying 
\begin{equation}\nonumber
			\Delta_\infty u =  \lambda (u^{+})^\gamma \quad \text{ in } B_R(X_0).
\end{equation}
We concluded that the function
$$
	u(X):= \tau(\lambda,\gamma) \left(|X-X_0|-R + \left( \frac{c}{\tau(\lambda,\gamma)} \right)^{\frac{3-\gamma}{4}} \right)_+^{\frac{4}{3-\gamma}}
$$
is the solution to \eqref{rad eq}. Its plateaus is precisely $B_r(X_0)$, where
\begin{equation}\label{setup}
0<r:=R-\left( \frac{c}{\tau(\lambda,\gamma)} \right)^{\frac{3-\gamma}{4}}.
\end{equation}

\smallskip 

Let us now deliver few elementary conclusions. Given a positive boundary data $c$, a radius $R>0$, a Thiele modulus $\lambda$,  and an exponent $0\le \gamma < 3$, then
\begin{enumerate}
	\item If the Thiele modulus $\lambda$ is sufficiently large (with bounds easily computable), then the radial boundary problem presents plateaus irrespective of $0\le \gamma <3$.
	\item As one should expect,  solution converges  locally uniform in to zero as $\lambda$ goes to infinity. 
	\item On the other hand, fixed any small Thiele modulus $\lambda_0>0$, the boundary value problem has plateaus provided $\gamma$ is sufficiently close to 3; and indeed, solutions to \eqref{rad eq} go to zero as $\gamma \nearrow 3$. 
\end{enumerate}
   
\begin{figure}[h!]
\begin{center}
\scalebox{0.8} 
{
\begin{pspicture}(0,-4.079871)(11.164836,3.9580135)
\definecolor{color593g}{rgb}{0.8941176470588236,0.8941176470588236,0.8980392156862745}
\definecolor{color593f}{rgb}{0.8666666666666667,0.8862745098039215,0.8862745098039215}
\definecolor{color593e}{rgb}{0.4,0.4,0.4}
\definecolor{color5}{rgb}{0.40784313725490196,0.3058823529411765,0.3058823529411765}
\definecolor{color24}{rgb}{0.9411764705882353,0.9254901960784314,0.9254901960784314}
\definecolor{color423}{rgb}{0.3058823529411765,0.3058823529411765,0.33725490196078434}
\definecolor{color39}{rgb}{0.2980392156862745,0.30196078431372547,0.34509803921568627}
\definecolor{color40}{rgb}{0.40784313725490196,0.28627450980392155,0.28627450980392155}
\pspolygon[linewidth=0.02,shadowcolor=color593e,linecolor=white,shadow=true,fillstyle=gradient,gradlines=2000,gradbegin=color593g,gradend=color593f,gradmidpoint=1.0](1.7024888,0.10496419)(0.0,-3.935036)(9.74,-3.995036)(11.08,0.08496411)
\usefont{T1}{ppl}{m}{n}
\rput(6.573795,-3.0703874){\color{color24}.}
\usefont{T1}{ppl}{m}{n}
\rput(4.3137946,0.12961239){\color{color24}.}
\rput{1.4066979}(-0.045747217,-0.14301053){\psellipse[linewidth=0.01,linecolor=color5,dimen=outer](5.801751,-1.9347274)(3.2207081,0.9394911)}
\psdots[dotsize=0.1,dotangle=-4.763642](5.744052,-1.9043761)
\usefont{T1}{ptm}{m}{n}
\rput(6.289036,-2.464376){\small $R$}
\rput{1.4066979}(-0.04506483,-0.14205708){\psellipse[linewidth=0.04,linecolor=color423,dimen=outer,fillstyle=vlines,hatchwidth=0.028222222,hatchangle=45.0](5.76326,-1.906458)(1.3386139,0.359646)}
\psbezier[linewidth=0.04,linecolor=color39](7.092489,-1.9043763)(8.12,-1.4750359)(8.76,1.6649641)(9.14,3.4649642)
\rput{1.4066979}(0.08498112,-0.1404915){\psellipse[linewidth=0.012,linecolor=color40,linestyle=dashed,dash=0.17638889cm 0.10583334cm,dimen=outer](5.7645197,3.3909202)(3.3611968,0.5217588)}
\psline[linewidth=0.02cm,linestyle=dashed,dash=0.16cm 0.16cm](5.762489,-1.935036)(7.18,-2.735036)
\usefont{T1}{ptm}{m}{n}
\rput(8.958896,-0.77437603){\footnotesize $\sim (|X|-r)_+^{\frac{4}{3-\gamma}}$}
\psline[linewidth=0.02cm,linestyle=dashed,dash=0.16cm 0.16cm](5.82,-1.8750359)(6.86,-1.6950359)
\usefont{T1}{ptm}{m}{n}
\rput(5.900911,-1.709376){\footnotesize \textit{$r$}}
\psline[linewidth=0.0139999995cm,tbarsize=0.07055555cm 5.0]{|*-|*}(1.94,3.344964)(2.1,-2.155036)
\usefont{T1}{ptm}{m}{n}
\rput(1.6690358,0.615624){\small $c$}
\psbezier[linewidth=0.04,linecolor=color39](4.44,-1.9643761)(3.2724888,-1.3150358)(2.8124888,1.5249641)(2.42,3.344964)
\usefont{T1}{ptm}{m}{n}
\rput(1.8434268,-3.594376){\footnotesize Plateaus}
\psline[linewidth=0.02,arrowsize=0.05291667cm 2.0,arrowlength=1.4,arrowinset=0.4,tbarsize=0.07055555cm 5.0]{|*->}(1.84,-3.375036)(1.84,-3.1150358)(4.76,-1.3750359)(5.08,-1.8750359)
\end{pspicture} 
}
\end{center}

\caption{This picture represents the radially symmetric dead core solution of the problem \eqref{rad eq}.}
\end{figure}
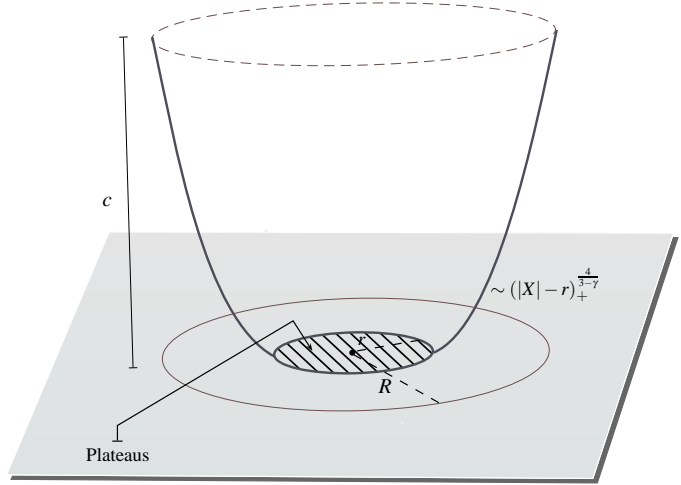

Now, if $v$ is an arbitrary solution to 
$$
	\Delta_\infty v = \lambda v_{+}^{\gamma}, \quad \text{ in } \Omega \subset \mathbb{R}^n,
$$
and $X_0 \in \Omega$ is an interior point, define $\mathfrak{s} \colon (0, \dist(X_0, \partial \Omega)) \to \mathbb{R}_{+}$ by
$$
	\mathfrak{s}(R) := \sup\limits_{B_R(X_0)} v.
$$
If for some $0<R <  \dist(X_0, \partial \Omega)$ , we have
$$
	\mathfrak{s}(R) < \tau(\lambda, \gamma)R^{\frac{4}{3-\gamma}},
$$
then $X_0$ is a plateaus point.  In particular, we can improve Theorem \ref{bernstein} to the following quantitative version:
\begin{theorem}\label{bernstein1}
Let $u$ be a viscosity solution to
\begin{equation}\label{eq thm B1}
	\Delta_\infty u = \lambda u_{+}^\gamma \quad \mbox{in} \quad \mathbb{R}^n.
\end{equation}
If 
\begin{equation}\label{cond thm B1}
	\limsup\limits_{|X| \to \infty} \dfrac{u(X)}{|X|^{\frac{4}{3-\gamma}}} < \sqrt[3-\gamma]{\lambda \cdot \frac{(3-\gamma)^4}{64(1+\gamma)}},
\end{equation}
 then $u\equiv 0$.
\end{theorem}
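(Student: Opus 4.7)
The plan is to convert the growth control \eqref{cond thm B1} into the observation singled out just above the statement: whenever $\sup_{B_R(X_0)} u < \tau(\lambda,\gamma)\, R^{4/(3-\gamma)}$ for some $R>0$, a comparison with the explicit radial dead-core barrier constructed in this Section forces $X_0$ to be a plateaus point, i.e.\ $u(X_0)=0$. Since $X_0$ is arbitrary, this yields $u \equiv 0$.

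First, I would rephrase \eqref{cond thm B1} quantitatively: pick $\tau' < \tau(\lambda, \gamma)$ and $M > 0$ so that $u(X) \le \tau'\,|X|^{4/(3-\gamma)}$ for every $|X| \ge M$. Continuity of $u$ then gives $C_M := \sup_{\overline{B}_M(0)} u < \infty$.

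Next, fix an arbitrary point $X_0 \in \mathbb{R}^n$. Splitting $B_R(X_0)$ into the regions $\{|X|<M\}$ and $\{|X|\ge M\}$ and using $|X| \le R+|X_0|$ on the second piece, one obtains
$$
\sup_{B_R(X_0)} u \;\le\; \max\!\bigl\{\, C_M,\; \tau'\,(R+|X_0|)^{4/(3-\gamma)} \,\bigr\}.
$$
Dividing by $R^{4/(3-\gamma)}$ and letting $R \to \infty$, the right-hand side tends to $\tau' < \tau(\lambda,\gamma)$. Therefore, for $R$ sufficiently large (depending on $|X_0|$, $M$, $C_M$, $\tau'$, $\tau$, $\gamma$), the boundary value $c := \sup_{\partial B_R(X_0)} u$ satisfies $c < \tau(\lambda,\gamma)\, R^{4/(3-\gamma)}$.

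At this stage I would compare $u$ with the explicit radial dead-core solution $v$ constructed in this Section, of $\Delta_\infty v = \lambda\, v_+^\gamma$ on $B_R(X_0)$ with $v \equiv c$ on $\partial B_R(X_0)$. By \eqref{tau}--\eqref{setup}, the plateaus of $v$ is $\overline{B}_r(X_0)$ with $r = R - (c/\tau(\lambda,\gamma))^{(3-\gamma)/4}>0$, and in particular $v(X_0)=0$. Since $u \le c = v$ on $\partial B_R(X_0)$, Lemma \ref{comparison principle} yields $u \le v$ throughout $B_R(X_0)$, and nonnegativity of $u$ (Section 3) then forces $u(X_0)=0$. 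The only delicate step is this quantitative choice of $R$; once the strict gap $\tau(\lambda,\gamma)-\tau'>0$ is exploited, the conclusion follows from a direct comparison with the explicit barrier.
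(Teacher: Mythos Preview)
Your proposal is correct and follows essentially the same route as the paper: both arguments compare $u$ on a large ball with the explicit radial dead-core solution built in this Section, using Lemma~\ref{comparison principle} and the strict gap in \eqref{cond thm B1} to guarantee the dead-core compatibility condition. The only cosmetic difference is that the paper centers the ball at the origin and lets $R\to\infty$ so that the barrier's plateau radius $(1-\theta^{(3-\gamma)/4})R$ eventually engulfs any fixed point, whereas you center at $X_0$ and observe directly that the center lies in the plateau once $R$ is large; the underlying mechanism is identical.
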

\begin{proof}
Fixed $R>0$, let us consider $v \colon  \overline{B_R} \to \mathbb{R}$, the solution to the boundary value problem
$$
	\left \{ 
		\begin{array}{rlll}
			\Delta_\infty v &=& \lambda (v^{+})^\gamma &\text{ in } B_R \\
			v &=& \sup\limits_{\partial B_R}u &\text{ on } \partial B_R.
		\end{array}
	\right.
$$
By comparison principle, Lemma \eqref{comparison principle}, $u \leq v$ in $B_R$. It follows by hypothesis \eqref{cond thm B1} that, taking $R\gg 1$ sufficiently large,
\begin{equation}\label{lim rad}
\sup\limits_{\partial B_R}\frac{u(X)}{R^{\frac{4}{3-\gamma}}} \leq \theta \cdot\tau(\lambda,\gamma)
\end{equation}
for some $\theta<1$. For $R\gg 1$, the solution $v = v_R$ is given by
\begin{equation}\label{rad eq3}
v(X)=\tau(\lambda,\gamma) \left(|X|-R + \left[\frac{\sup\limits_{\partial B_R}u}{\tau(\lambda,\gamma)}\right]^{\frac{3-\gamma}{4}} \right)_+^{\frac{4}{3-\gamma}}.
\end{equation} 
Finally, combining \eqref{lim rad} and \eqref{rad eq3}, we get
$$
u(X) \leq \tau(\lambda,\gamma) \left(|X|- (1-\theta^{\frac{3-\gamma}{4}})R \right)_+^{\frac{4}{3-\gamma}},
$$
Letting $R \to \infty$ we conclude the proof of the Theorem.  
\end{proof} 

We conclude by pointing out that Theorem \ref{bernstein1}, as stated,  is sharp in the sense that one cannot remove the strict inequality in \eqref{cond thm B1}.  Indeed, the function
$$
	h(X) := \sqrt[3-\gamma]{\lambda \cdot \frac{(3-\gamma)^4}{64(1+\gamma)}} |X|^{\frac{4}{3-\gamma}},
$$
solves \eqref{eq thm B1} in $\mathbb{R}^n$ and it clearly attains equality in  \eqref{cond thm B1}.

\section{Minimal growth rate and measure estimates} 

In this section we show that the regularity estimate established in Theorem  \ref{thmreg} is indeed sharp. This is done by establishing a competing inequality which controls the minimal growth rate of the solution away from its free boundary.

\begin{theorem}[Nondegeneracy]
\label{nondegthm}  Let $u \in C(\Omega)$ be a nonnegative viscosity solution to 
\begin{equation}\label{lgamma}
\mathcal{L}^{\,\gamma}_{\, \infty} \, u = 0 \quad \mbox{in} \quad \Omega
\end{equation}
and $X_{0} \in \overline{\left\lbrace u> 0 \right\rbrace} \cap \Omega$. There exists a universal constant $c_0>0$, such that 
\begin{eqnarray}\label{nondeg est}
	\sup_{B_{r}\left( X_{0} \right)}u \geq c_0 \cdot r^{\frac{4}{3-\gamma}},
\end{eqnarray}
for all  $0<r <\dist (X_0, \partial \Omega)$.
\end{theorem}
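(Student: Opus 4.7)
The plan is to use the explicit radial dead-core solution constructed in Section 4 as an upper barrier and close a contradiction argument via the comparison principle (Lemma \ref{comparison principle}). Set the sharp constant
$$
c_0 := \tau(1,\gamma) = \sqrt[3-\gamma]{\dfrac{(3-\gamma)^4}{64(1+\gamma)}}
$$
from \eqref{tau} and consider the radial barrier $\Psi_{X_0}(X) := c_0\, |X - X_0|^{\frac{4}{3-\gamma}}$. By the pointwise computation already performed in Section 4, $\Psi_{X_0}$ solves $\Delta_\infty w = w^\gamma$ classically on $\mathbb{R}^n \setminus \{X_0\}$, and the supersolution property extends in the viscosity sense to the apex $X_0$: any $\varphi \in C^2$ touching $\Psi_{X_0}$ from above at $X_0$ must satisfy $\nabla\varphi(X_0)=0$, since otherwise a first-order Taylor expansion in the direction of $\nabla\varphi(X_0)$ would dominate the strictly superlinear growth $|X-X_0|^{\frac{4}{3-\gamma}}$ of $\Psi_{X_0}$ (using $\frac{4}{3-\gamma} > 1$ for $\gamma \in [0,3)$); hence $\Delta_\infty \varphi(X_0)=0=\Psi_{X_0}^\gamma(X_0)$ automatically.

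A continuity argument reduces matters to the case $X_0 \in \{u>0\}$: for $X_0 \in \partial\{u>0\} \cap \Omega$, pick $X_k \in \{u>0\}$ with $X_k \to X_0$, apply the sought estimate on $B_{r - |X_k-X_0|}(X_k) \subset B_r(X_0)$, and send $k \to \infty$. Fix therefore $X_0 \in \{u>0\}$ and $0 < r < \dist(X_0, \partial\Omega)$, and suppose, for the sake of contradiction, that $\sup_{B_r(X_0)} u < c_0\, r^{\frac{4}{3-\gamma}}$. Then on $\partial B_r(X_0)$ one has $u < \Psi_{X_0} \equiv c_0\, r^{\frac{4}{3-\gamma}}$, while both $u$ and $\Psi_{X_0}$ are viscosity solutions of $\mathcal{L}^{\,\gamma}_{\,\infty} w = 0$ in $B_r(X_0)$. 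Lemma \ref{comparison principle} therefore yields $u \leq \Psi_{X_0}$ throughout the entire ball, and evaluating at the center gives $0 < u(X_0) \leq \Psi_{X_0}(X_0) = 0$, a contradiction.

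The main (and essentially only) technical delicacy lies in the viscosity-sense verification of $\Psi_{X_0}$ at its non-smooth point $X_0$, as indicated above; once that is in place, the estimate \eqref{nondeg est} follows by a direct barrier comparison, with the universal constant $c_0$ matching the fundamental radial solution described around \eqref{setup} — and so the nondegeneracy rate of $u$ away from its plateaus boundary is indeed captured by the same sharp exponent $\frac{4}{3-\gamma}$ delivered by Theorem \ref{thmreg}.
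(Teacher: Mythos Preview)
Your argument is correct and follows essentially the same route as the paper: build the radial barrier $c\,|X-X_0|^{4/(3-\gamma)}$, verify it is a viscosity supersolution of $\mathcal{L}^{\gamma}_{\infty}$ (including at the apex, which you handle more carefully than the paper), and close by comparison against $u$ on $B_r(X_0)$. The only cosmetic differences are that the paper takes $c$ strictly below $\tau(1,\gamma)$ to get a \emph{strict} supersolution, whereas you take $c_0=\tau(1,\gamma)$ and use $\Psi_{X_0}$ as an exact solution---and note that in your apex discussion the test function should touch $\Psi_{X_0}$ from \emph{below} (not above) for the supersolution check, though your gradient-vanishing argument works verbatim in that case.
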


\begin{proof}
By continuity, it suffices to prove \eqref{nondeg est} for points within the set $ \left\lbrace u> 0 \right\rbrace \cap \Omega'$. Initially define
\begin{eqnarray*}
\psi \left( X \right) := c \cdot \vert X - X_{0}\vert^{\alpha},
\end{eqnarray*}
for $\alpha:= \frac{4}{3-\gamma}$ and $c>0$ a constant that will be fixed {\it a posteriori}. By direct computation,
$$
D \psi  (X) = c \alpha |X-X_0|^{\alpha-1}\cdot \frac{X-X_0}{|X-X_0|}.
$$
Continuing, direct computations further yield
\begin{eqnarray*}
D^2 \psi \left( X \right) = c \alpha \left[(\alpha-1)|X-X_0|^{\alpha-2} \cdot \frac{(X-X_0)\otimes (X-X_0)}{|X-X_0|^2}\right. \\
\left. +  |X-X_0|^{\alpha-2}\cdot \left( \text{Id}_{n \times n}-\frac{(X-X_0)\otimes(X-X_0)}{|X-X_0|^2}\right)\right].
\end{eqnarray*}
Therefore, we conclude
 \begin{eqnarray*}
\langle D^2\psi \cdot D\psi,D\psi \rangle \left( X \right) = \left( c \alpha \right)^3(\alpha-1) |X-X_0|^{2(\alpha-1)+(\alpha-2)}
\end{eqnarray*}
and hence, by selecting (and fixing) the constant $c$ within the range
$$
0 < c <  \sqrt[3 - \gamma]{\dfrac{\left( 3 - \gamma \right)^{4} }{64\left( 1+ \gamma  \right)}},
$$
we reach 
$$
\mathcal{L}^{\,\gamma}_{\, \infty} \psi < 0 = \mathcal{L}^{\,\gamma}_{\, \infty}\; u .
$$

Now, for any ball $B_r(X_0) \subset \Omega$, there must exist a point $Z \in \partial B_{r}\left( X_{0}\right)$ such that $\psi \left( Z \right) < u\left( Z \right)$; otherwise, by comparison principle, Lemma \ref{comparison principle}, $\psi \ge u$ in the whole ball $ B_{r}\left( X_{0} \right)$. However, $0 = \psi \left( X_{0} \right) < u\left( X_{0} \right)$. In conclusion, we  can estimate
\begin{eqnarray*}
\sup_{B_{r}\left( X_{0} \right)}u \geq u\left( Y_{r} \right) \geq \psi \left( Y_{r} \right) = c \cdot r^{\frac{4}{3-\gamma}}
\end{eqnarray*}
and the Theorem is proven.
\end{proof}

\begin{corollary}\label{H est}
Given a subdomain $\Omega' \Subset \Omega$, there exists a  constant $\iota >0$ depending on $\|u\|_{L^\infty(\Omega)}, \gamma$ and $\Omega'$ such that for $u \in C(\Omega)$ a nonnegative, bounded viscosity solution to \eqref{lgamma} in $\Omega$, there holds
\begin{eqnarray*}
\frac{\mathscr{L}^{n}\left( B_{r}\left( X_{0}\right) \cap \left\lbrace u> 0 \right\rbrace  \right)}{r^{\,n}} \geq \iota,
\end{eqnarray*}
for any $X_0 \in \partial\{u>0\} \cap \Omega'$ and $0<r \ll 1$. In addition, for a universal constant $0< \sigma_{0} \le 1$, depending only on dimension and $\gamma$, the $(n-\sigma_0)$-Hausdorff measure of $\partial \{u>0\}$ is locally finite.
\end{corollary}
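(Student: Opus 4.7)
The plan is to combine, at each free boundary point, the nondegeneracy of Theorem \ref{nondegthm} with the matching upper estimate of Theorem \ref{thmreg}, so as to locate inside each ball $B_r(X_0)$ centered at a plateaus point an entire sub-ball of $\{u>0\}$ of radius comparable to $r$. This sub-ball simultaneously yields the positive density estimate and witnesses that $\partial\{u>0\}$ is porous; a classical result of geometric measure theory then produces the dimension drop.

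\emph{Step 1 (a large interior ball).} Fix intermediate domains $\Omega' \Subset \Omega'' \Subset \Omega$. For $X_0 \in \partial\{u>0\} \cap \Omega'$ and $0<r<\dist(\Omega',\partial\Omega'')$ one has $B_r(X_0)\subset\Omega''$. By Theorem \ref{nondegthm} applied at scale $r/2$ there exists $Y \in \overline{B_{r/2}(X_0)}$ with
$$
u(Y)\;\geq\; c_0\,(r/2)^{\frac{4}{3-\gamma}}.
$$
Let $Z \in \partial\{u>0\}$ be a closest free boundary point to $Y$ and put $d:=|Y-Z|$; since $X_0 \in \partial\{u>0\}$, we have $d\leq r/2$ and in particular $Z\in\Omega''$. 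Theorem \ref{thmreg} applied at $Z$, with its constant $C$ depending only on $\|u\|_\infty$, $\gamma$ and $\dist(\Omega'',\partial\Omega)$, gives $u(Y)\leq C\,d^{4/(3-\gamma)}$. Comparing these two inequalities forces
$$
d\;\geq\;\theta\, r,\qquad \theta:=\tfrac12\,(c_0/C)^{(3-\gamma)/4}>0.
$$

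\emph{Step 2 (density and porosity).} By the choice of $d$ we have $B_d(Y)\subset\{u>0\}$, and by the triangle inequality $B_d(Y)\subset B_r(X_0)$. Therefore
$$
\mathscr{L}^n\bigl(B_r(X_0)\cap\{u>0\}\bigr)\;\geq\;\mathscr{L}^n(B_1)\,\theta^n\,r^n,
$$
which is the density estimate with $\iota:=\mathscr{L}^n(B_1)\,\theta^n$. Since $\{u>0\}$ is open, the same sub-ball $B_{\theta r}(Y)$ is disjoint from $\partial\{u>0\}$. Hence for every $X_0 \in \partial\{u>0\}\cap\Omega'$ and every admissible $r$, the ball $B_r(X_0)$ contains a ball of radius $\theta r$ avoiding $\partial\{u>0\}$: this is precisely the definition of porosity of $\partial\{u>0\}\cap\Omega'$, with a uniform porosity constant $\theta$.

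\emph{Step 3 (dimension drop).} A classical result of Martio--Vuorinen (see also Zaj\'\i\v{c}ek) ensures that any $\theta$-porous subset of $\mathbb{R}^n$ has Hausdorff dimension at most $n-\sigma_0$ for some $\sigma_0 = \sigma_0(n,\theta)>0$, with locally finite $(n-\sigma_0)$-Hausdorff measure. Applied to $\partial\{u>0\}\cap\Omega'$ this yields the second conclusion. The one delicate point in the whole plan is keeping the constant $C$ of Theorem \ref{thmreg} uniform as $Z$ varies over free boundary points near $\Omega'$; this is settled by the interposition of the intermediate domain $\Omega''$, after which the rest is a clean combination of standard facts.
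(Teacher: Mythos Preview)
Your proof is correct and follows essentially the same strategy as the paper's: combine the nondegeneracy estimate (Theorem~\ref{nondegthm}) with the upper growth bound (Theorem~\ref{thmreg}) to trap a ball of radius $\sim r$ inside $\{u>0\}\cap B_r(X_0)$, deduce porosity, and then invoke a classical porosity $\Rightarrow$ dimension-drop result. The only cosmetic differences are that you apply nondegeneracy at scale $r/2$ (so that the inclusion $B_d(Y)\subset B_r(X_0)$ is automatic), you make explicit the intermediate domain $\Omega''$ to keep the constant $C$ uniform, and you cite Martio--Vuorinen/Zaj\'{\i}\v{c}ek where the paper cites Koskela--Rohde; the paper instead applies nondegeneracy at scale $r$ and then shifts to a convex combination $X_0'=\sigma Y_r+(1-\sigma)X_0$ to recenter the porosity ball.
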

\begin{proof}
In view of Theorem \ref{nondegthm}, for some $r>0$ fixed, it is possible to select a point $Y_0$ such that,
\begin{equation}\label{dens}
u(Y_0)= \sup\limits_{B_r(X_0)} u \geq c_0 \cdot r^{\frac{4}{3-\gamma}}.
\end{equation}
To conclude, we claim that for some $\delta>0$, chosen universally small,
the following inclusion
\begin{equation}\label{inclusion}
B_{\delta \cdot r}(Y_0) \subset  \{u>0\}
\end{equation}
holds. Indeed, by  Theorem \ref{thmreg}, for $Z \in \partial\{u>0\}$, we reach
$$
u(Y_0) \leq C \cdot |Y_0 - Z|^{\frac{4}{3-\gamma}}.
$$
Therefore, by \eqref{dens} and the inequality above, we find
$$
c_0 \cdot r^{\frac{4}{3-\gamma}} \leq C \cdot |Y_0-Z|^{\frac{4}{3-\gamma}}
$$
and so,
$$
\left(\dfrac{c_0}{C}\right)^{\frac{3-\gamma}{4}}\cdot r \leq |\,Y_0-Z|.
$$
Hence, taking $\delta>0$ sufficiently small, the inclusion claimed in \eqref{inclusion} is verified.  

We conclude  with the analysis of the Hausdorff dimension of the free boundary. Let $X_{0} \in \partial \left\lbrace u > 0 \right\rbrace$. From the above reasoning, we can always select 
\begin{eqnarray*}
X'_{0} = \sigma Y_{r} + \left( 1 - \sigma \right)X_{0}.
\end{eqnarray*}
with $0<1-\sigma \ll 1$, such that
\begin{eqnarray*}
B_{\sigma\frac{ r}{2}}\left( X'_{0}  \right) \subset B_{\sigma} \left(  Y_{r} \right) \cap B_{r}\left( X_{0}\right)  \subset B_{r}\left( X_{0}\right) \setminus \partial \left\lbrace u > 0\right\rbrace.
\end{eqnarray*}
Hence the set $\partial \left\lbrace u> 0 \right\rbrace \cap \Omega'$ is $\left( \sigma / 2 \right)$-porous and therefore, by a classical result,  see for instance \cite[ Theorem 2.1]{KR}, the Hausdorff dimension of $\partial \left\lbrace u> 0 \right\rbrace \cap \Omega'$ is at most $n - C\sigma^{n}$ for some dimensional constant $C >0$. 
\end{proof}

\begin{remark}
	The Hausdorff dimension estimate provided by Corollary \ref{H est} assures in particular  that the $\Leb$-Lesbegue measure of the plateaus boundary is zero, but no quantitative information is given on its precise Hausdorff dimension. We believe $\sigma_0 = 1$, and leave this is an open problem.
\end{remark}

\section{The critical equation $\mathcal{L}_\infty^3$}

In this Section we turn our attention to the critical equation obtained as $\gamma \nearrow 3$, that is,
\begin{equation}\label{localeq3}
	\mathcal{L}^{\,3}_{\, \infty} \, u  :=\Delta_{\, \infty}\, u - u^{3} = 0 \quad \mbox{in} \quad \Omega.
\end{equation}
Such an operator is regarded to be critical, as all the estimates established so far deteriores when one let  $\gamma$ converge to $3$. Certainly, one can treat equation \eqref{localeq3} as 
$$
	\Delta_\infty = (u^\delta) \cdot u^{3-\delta},
$$
for any $\delta > 0$. In particular, it follows from Theorem \ref{thmreg} that  if $u$ vanishes at an interior point $\xi \in \Omega$, then $D^n u (\xi) = 0$, for all $n\in \mathbb{N}$. That is,  any zero is an infinite order zero. Under the (very strong) assumption that $u$ is a real analytic function, one could conclude that $u \equiv 0$. 

As mentioned before, Lipschitz regularity is the best local estimate available in the literature for such a solution. Even in the best scenario possible, one could not expect estimates beyond $C^{1,\alpha}$.  Thus assuming $u$ is real analytic would simply be artificial.

Nonetheless, by means of geometric arguments, which explores the scalar invariance of the operator $\mathcal{L}_\infty^3$, we shall  prove that indeed a positive solution to \eqref{localeq3} is prevented to vanish at an interior point. 
 

\begin{theorem} \label{strongmp} Let $u \in C(\Omega)$ be a nonnegative viscosity solution to \eqref{localeq3}. If there exists a point $X \in \Omega$ such that $u(X)=0$, then $u \equiv 0$ in $\Omega$.
\end{theorem}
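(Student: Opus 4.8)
I would prove this by a Hopf‑type boundary point argument, with the barrier supplied by the explicit radially exponential solutions of $\mathcal{L}_\infty^3$; this is exactly where the scalar invariance of the operator is used. Since $\Delta_\infty$ is $3$‑homogeneous, $\mathcal{L}_\infty^3(\lambda w)=\lambda^3\mathcal{L}_\infty^3 w$ for all $\lambda$, and for a radial profile $w(X)=\phi(|X-p|)$ one has $\Delta_\infty w=\phi''(\phi')^2$; hence the one‑parameter family $v_\varepsilon(X)=\varepsilon\big(e^{-|X-p|}-e^{-R}\big)$, $\varepsilon>0$ (for any $p\in\mathbb{R}^n$ and $R>0$), satisfies $\Delta_\infty v_\varepsilon=\varepsilon^3 e^{-3|X-p|}\ge v_\varepsilon^3$ wherever $0\le v_\varepsilon$ and $|X-p|\le R$, so each $v_\varepsilon$ is a classical (hence viscosity) subsolution of $\mathcal{L}_\infty^3$ in $B_R(p)\setminus\{p\}$. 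The exponential, as opposed to power‑law, profile is the ODE signature of the absence of a dead core at the critical exponent $\gamma=3$, and it drives the whole argument.

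First I would localize. Arguing by contradiction, assume $u\not\equiv 0$ while $u(X_0)=0$. As $\{u>0\}$ is open, nonempty and differs from the connected set $\Omega$, we may pick $Z_0\in\partial\{u>0\}\cap\Omega$, then a point $p\in\{u>0\}$ close enough to $Z_0$ that the largest ball $B_d(p)\subset\{u>0\}$ touches $\partial\{u>0\}$ at a point $q$ which, by this construction, still lies interior to $\Omega$, with $u(q)=0$ and $\overline{B_d(p)}\Subset\Omega$. This is the classical geometric configuration: an interior ball tangent to the plateaus at a point interior to $\Omega$.

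Next, on the annulus $A:=B_d(p)\setminus\overline{B_{d/2}(p)}$ I would compare $u$ with $v:=v_\varepsilon$ for the choice $R=d$. On $\partial B_d(p)$ we have $v\equiv 0\le u$; on $\partial B_{d/2}(p)$ we have $v\le u$ once $\varepsilon$ is taken small relative to $\min_{\partial B_{d/2}(p)}u$, which is positive because $\overline{B_{d/2}(p)}\subset\{u>0\}$. Since $v$ is a subsolution and $u$ a supersolution of $\mathcal{L}_\infty^3$ on $A$, the comparison principle (the proof of Lemma \ref{comparison principle} uses only $u\ge 0$ and monotonicity of $t\mapsto t^3$ on $[0,\infty)$, so it applies verbatim for $\gamma=3$) yields $v\le u$ throughout $A$.

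Finally I would extract the contradiction. Because $u$ is locally bounded, $\Delta_\infty u=u^3$ is bounded in the viscosity sense near $q$, so $u$ is differentiable at $q$ by \cite{E}; as $q$ is an interior minimum of the nonnegative function $u$ with $u(q)=0$, this forces $Du(q)=0$. But $u-v\ge 0$ on $\overline A$ with equality at the boundary point $q\in\partial B_d(p)$, so the outward normal derivative of $u-v$ at $q$ is $\le 0$, i.e. $Du(q)\cdot\nu\le Dv(q)\cdot\nu$ with $\nu=(q-p)/d$; evaluating gives $0\le -\varepsilon e^{-d}<0$, which is absurd. Hence a nontrivial solution has no interior zero. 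The delicate point I anticipate is precisely this last step — running a Hopf argument for a solution known only to be differentiable (not $C^1$), which is why the everywhere‑differentiability theorem of \cite{E} for functions with bounded infinity‑Laplacian is indispensable; realizing the tangent‑ball configuration with $q$ interior to $\Omega$ needs only mild care, and the barrier inequality is immediate once the exponential ansatz is chosen.
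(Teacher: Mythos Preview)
Your argument is correct, and its architecture coincides with the paper's: an interior tangent ball $B_d(p)\subset\{u>0\}$ touching $\partial\{u>0\}$ at an interior point, a radially decreasing exponential barrier on the annulus $B_d(p)\setminus\overline{B_{d/2}(p)}$, the scalar invariance $\mathcal{L}_\infty^3(\theta w)=\theta^3\mathcal{L}_\infty^3 w$ to push the barrier below $u$ on the inner sphere, and the comparison principle (which, as you note, goes through unchanged for $\gamma=3$). The difference lies only in how the contradiction is extracted at the touching point $q$. The paper rewrites $\Delta_\infty u = u\cdot u^2$ as an equation with bounded Thiele modulus and invokes Remark~\ref{rmk1} (the variable-coefficient version of Theorem~\ref{thmreg} with exponent $\gamma=2$) to obtain $\sup_{B_r(q)}u\le C r^{4}$, which for small $r$ falls below the linear lower bound $\theta\beta_0 r$ forced by the barrier's nonvanishing gradient. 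You instead appeal to Lindgren's everywhere-differentiability theorem \cite{E} to conclude $Du(q)=0$, and then read off the contradiction directly from the one-sided inequality $D(u-v)(q)\cdot\nu\le 0$ against $Dv(q)\cdot\nu=-\varepsilon e^{-d}<0$.

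Your route is arguably more elementary in that it bypasses the flatness-improvement iteration of Section~4 entirely; the paper's route is more self-contained, using only results proved within the article, and in passing yields the stronger decay $u=O(r^4)$ at the hypothetical free boundary point (more than the $o(r)$ that differentiability alone gives, though the surplus is not needed here). Your choice of profile $e^{-|X-p|}$ is also a bit cleaner than the paper's $e^{-\lambda|X|^2}$, which requires tuning $\lambda\gg 1$ to become a subsolution; with $e^{-r}$ the subsolution inequality $\phi''(\phi')^2=\varepsilon^3e^{-3r}\ge \varepsilon^3(e^{-r}-e^{-d})^3$ is immediate. One small stylistic remark: the last step really does need differentiability of $u$ at $q$ (not merely a one-sided lim\,inf), since you use both $Du(q)=0$ and the existence of $D(u-v)(q)\cdot\nu$; you flag this correctly as the delicate point, and \cite{E} supplies exactly what is required.
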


\begin{proof}
Let us suppose, for the purpose of contradiction,  that the thesis of the theorem fails to hold. With no loss generality we assume $u(0) > 0$ and 
$$
	d:=\dist(0,\{u=0\}) < \dfrac{1}{10}\dist(0,\partial \Omega).
$$ 
By comparison principle $u$ is locally bounded. We now build up the following auxiliary barrier function
\begin{equation}\nonumber
\Phi_{\lambda}(|X|)= 
\left\{
\begin{array}{ccc}
e^{-\lambda(d/2)^2}-\kappa_0 & \mbox{in} & B_{d/2}; \\
e^{-\lambda|X|^2}-\kappa_0 & \mbox{in} & B_d \setminus B_{d/2}; \\
0 & \mbox{in} & \mathbb{R}^n \setminus B_d,
\end{array}
\right. 
\end{equation} 
for $\kappa_0$ such that $\Phi_\lambda(d^+)=0$.  By construction, one easily verifies that
\begin{equation}\label{gradphi}
\inf\limits_{B_d \setminus B_{d/2}}|\nabla \Phi_\lambda| \geq \beta_0
\end{equation}
for some $\beta_0>0$, easily computable if one desires. Moreover, direct computation yields
$$
\mathcal{L}^{\,3}_{\, \infty} \, \Phi_\lambda \geq 0 \quad \mbox{in} \quad B_d \setminus B_{d/2},
$$
provided  $\lambda \gg 1$. The important  observation is that the operator $\mathcal{L}_{\infty}^3$ is invariant under scalar multiplication, that is, for any number $\theta>0$
$$
\mathcal{L}^{\,3}_{\, \infty}(\theta \cdot \Phi_\lambda) = \mathcal{L}^{\,3}_{\, \infty}\, \Phi_\lambda \geq 0 = \mathcal{L}^{\,3}_{\, \infty}\, u  \quad \mbox{in} \quad B_d \setminus B_{d/2}.
$$
In addition, taking $0<\theta \ll 1$ we get
$$
\theta \cdot \Phi_\lambda \leq u \quad \mbox{in} \quad \partial B_d \cup \partial B_{d/2}.
$$
Therefore, by comparison principle, Lemma \ref{comparison principle}, 
\begin{equation}\label{cprin}
\theta \cdot \Phi_\lambda \leq  u \quad \mbox{in} \quad B_d \setminus B_{d/2}.
\end{equation}
On the other hand, equation \eqref{localeq3} can be written as
$$
	\Delta_\infty u = \left [u(X) \right ]\cdot u^{2} = \lambda(X) u^2, 
$$ 
for a bounded Thiele modulus $\lambda(X) = u(X)$. Hence, in view of Remark \ref{rmk1}, we obtain 
$$
	\sup\limits_{B_r(Y_0)} u \leq C \cdot r^{4},
$$
for $Y_0 \in \partial B_d \cap \partial\{u>0\}$. Now, we choose $0<r_0 \ll 1$ such that 
$$
	C \cdot r_0^{4} \le \dfrac{1}{4} \theta \beta_0 \cdot r^{}_0.
$$ 
Finally, by \eqref{gradphi} and \eqref{cprin}, we reach
\begin{eqnarray*}
	0< \theta\beta_0\cdot r_0 &\leq& \sup\limits_{B_{r_0}(Y_0)} \theta \cdot |\Phi_\lambda(X)-\Phi_\lambda(Y_0)| \\
	&\leq&  \sup\limits_{B_{r_0}(Y_0)} \theta\cdot \Phi_\lambda \\
	&\leq& \sup\limits_{B_{r_0}(Y_0)} u \\
	&\le & C \cdot r_0^{4} \\
	&\le &\dfrac{1}{4} \theta \beta_0 \cdot r^{}_0,
\end{eqnarray*}
which gives us a contradiction. The proof of Theorem \ref{strongmp} is complete. 
\end{proof}

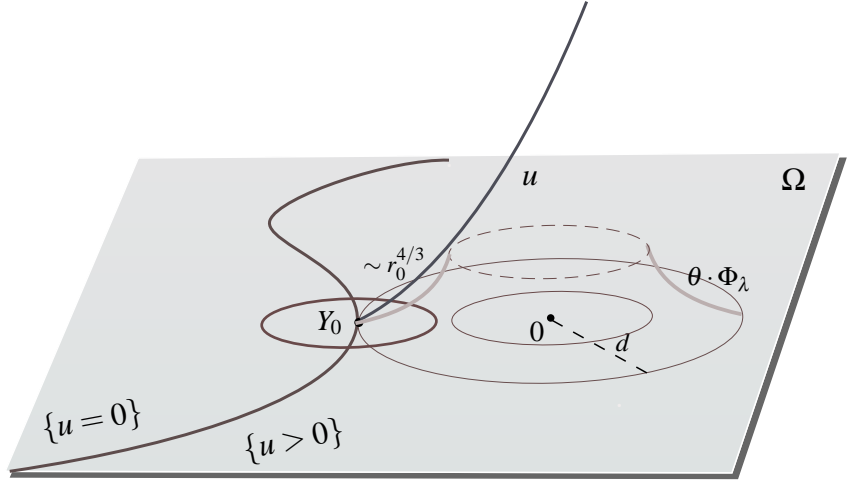
\begin{figure}[h!]
\begin{center}
\scalebox{1} 
{
\begin{pspicture}(0,-3.2182996)(11.421562,3.155876)
\definecolor{color593g}{rgb}{0.8941176470588236,0.8941176470588236,0.8980392156862745}
\definecolor{color593f}{rgb}{0.8666666666666667,0.8862745098039215,0.8862745098039215}
\definecolor{color593e}{rgb}{0.4,0.4,0.4}
\definecolor{color5}{rgb}{0.40784313725490196,0.3058823529411765,0.3058823529411765}
\definecolor{color574}{rgb}{0.9411764705882353,0.9254901960784314,0.9254901960784314}
\definecolor{color579}{rgb}{0.7333333333333333,0.6862745098039216,0.6862745098039216}
\definecolor{color585}{rgb}{0.36470588235294116,0.2980392156862745,0.2980392156862745}
\definecolor{color586}{rgb}{0.40784313725490196,0.28627450980392155,0.28627450980392155}
\definecolor{color589}{rgb}{0.2980392156862745,0.30196078431372547,0.34509803921568627}
\pspolygon[linewidth=0.02,shadowcolor=color593e,linecolor=white,shadow=true,fillstyle=gradient,gradlines=2000,gradbegin=color593g,gradend=color593f,gradmidpoint=1.0](1.77,1.0558761)(0.0,-3.113464)(9.660778,-3.133464)(11.1,1.1265359)
\usefont{T1}{ppl}{m}{n}
\rput(8.164119,-2.2394755){\color{color574}.}
\usefont{T1}{ppl}{m}{n}
\rput(5.9041185,0.9605243){\color{color574}.}
\rput{1.4066979}(-0.02513735,-0.17827968){\psellipse[linewidth=0.01,linecolor=color5,dimen=outer](7.2485213,-1.1129501)(2.5623963,0.8324325)}
\psdots[dotsize=0.1,dotangle=-4.763642](7.251563,-1.0734642)
\usefont{T1}{ptm}{m}{n}
\rput(8.216547,-1.3934642){\small $d$}
\psbezier[linewidth=0.05,linecolor=color579](9.79,-1.024124)(9.05,-0.8641239)(8.67,-0.5841239)(8.535682,-0.10412391)
\usefont{T1}{ptm}{m}{n}
\rput{11.542394}(-0.46670577,-0.82827026){\rput(3.841094,-2.7034643){\large $\{u > 0 \}$}}
\usefont{T1}{ptm}{m}{n}
\rput{10.483721}(-0.42838863,-0.25659156){\rput(1.1610937,-2.4434643){\large $\{u =0\}$}}
\usefont{T1}{ptm}{m}{n}
\rput(4.3214064,-1.1384641){$Y_0$}
\usefont{T1}{ptm}{m}{n}
\rput(10.481093,0.7765358){\large $\Omega$}
\usefont{T1}{ptm}{m}{n}
\rput(7.0741405,-1.2484641){$0$}
\psbezier[linewidth=0.04,linecolor=color585](0.06,-3.1134644)(2.6,-2.7334645)(4.44,-2.073464)(4.66,-1.2534641)(4.88,-0.4334642)(3.4,-0.093464166)(3.52,0.22653583)(3.64,0.54653585)(5.14,1.0465358)(5.9,1.0265357)
\rput{1.4066979}(-0.024212413,-0.17881462){\psellipse[linewidth=0.01,linecolor=color586,dimen=outer](7.2707715,-1.0755461)(1.3386142,0.359646)}
\psdots[dotsize=0.12](4.7,-1.1334641)
\rput{1.4066979}(-0.026648972,-0.11256185){\psellipse[linewidth=0.036,linecolor=color586,dimen=outer](4.5711675,-1.1416575)(1.1793989,0.34184355)}
\psbezier[linewidth=0.04,linecolor=color589](4.72,-1.0934643)(5.87,-0.50412387)(6.93,1.135876)(7.73,3.135876)
\rput{1.4066979}(-0.0026213082,-0.17756745){\psellipse[linewidth=0.012,linecolor=color586,linestyle=dashed,dash=0.17638889cm 0.10583334cm,dimen=outer](7.2307715,-0.19554606)(1.3386142,0.359646)}
\psline[linewidth=0.02cm,linestyle=dashed,dash=0.16cm 0.16cm](7.27,-1.104124)(8.53,-1.8041239)
\psbezier[linewidth=0.05,linecolor=color579](4.6556816,-1.144124)(5.395682,-0.9841239)(5.775682,-0.7041239)(5.91,-0.2241239)
\usefont{T1}{ptm}{m}{n}
\rput(5.1757817,-0.3434641){\footnotesize $\sim r_0^{4/3}$}
\usefont{T1}{ptm}{m}{n}
\rput(6.9782815,0.7865359){\large $u$}
\usefont{T1}{ptm}{m}{n}
\rput(9.493594,-0.55946407){\small $\theta\cdot \Phi_\lambda$}
\end{pspicture}    
}
\end{center}

\caption{Barrier argument in the proof of Theorem \ref{strongmp}.}
\end{figure}

\begin{remark}
We note that in fact the proof of Theorem \ref{strongmp} yields a Hopf-type lemma for the critical equation \eqref{localeq3}.
\end{remark}

\noindent \textsc{Dami\~ao J. Ara\'ujo} \hfill \textsc{Raimundo Leit\~ao}\\
University of Florida/UNILAB\hfill  Universidade Federal do Cear\'a \\
Department of Mathematics \hfill Department of Mathematics \\
Gainvesville, FL-USA 32611-8105 \hfill Fortaleza, CE-Brazil 60455-760\\
\texttt{daraujo@ufl.edu} \hfill
\texttt{rleitao@mat.ufc.br}

\vspace{1cm}
\noindent \textsc{Eduardo V. Teixeira}\\
Universidade Federal do Cear\'a \\
Department of Mathematics  \\
Fortaleza, CE-Brazil 60455-760 \\
\texttt{teixeira@mat.ufc.br}

\end{document}